\newcommand{\stkout}[1]{\ifmmode\text{\sout{\ensuremafth{#1}}}\else\sout{#1}\fi}
\newcommand{\W}{{\mathcal W}}
\def\N{\mathbb{N}}
\newcommand{\calL}{\mathcal{L}}
\newcommand{\mD}{{\boldsymbol{D}}}
\newcommand{\mM}{{\boldsymbol{M}}}
\newcommand{\mS}{{\boldsymbol{S}}^n}
\newcommand{\I}{\mathcal{I}}
\newcommand{\vUh}{{\rm {\boldsymbol {\rm U}}}_h}
\newcommand{\vFh}{{\rm {\boldsymbol {\rm F}}}_h}
\newcommand{\vWh}{{\rm {\boldsymbol {\rm V}}}_h}
\newcommand{\vu}{u}
\newcommand{\bsx}{{\boldsymbol{x}}}
\newcommand{\bsy}{{\boldsymbol{y}}}
\newcommand{\bg}{{\bf g}}
\newcommand{\bbeta}{\boldsymbol \beta}
\newcommand{\R}{\mathbb{R}}
\newcommand{\calS}{\mathcal{S}}
\numberwithin{equation}{section}
\newcommand{\iprod}[1]{\langle#1 \rangle}
\newcommand{\argmin}{\arg\min}
\newcommand{\ud}{\mathrm{d}}
\newtheorem{theorem}{Theorem}[section]
\newtheorem{lemma}{Lemma}[section]
\title{Time-fractional diffusion equations with randomness, and efficient numerical estimations of expected values\thanks{This work was supported by the Australian Research Council grant DP220101811.}\thanks{School of Mathematics and Statistics, University of New South Wales, Sydney, Australia}}
\author{Josef Dick, Hecong Gao, William McLean and Kassem Mustapha}
\begin{document}

\maketitle
\begin{abstract}
In this work, we explore a time-fractional diffusion equation of order $\alpha \in (0,1)$ with a stochastic diffusivity parameter. We focus on efficient estimation of the expected values (considered as an infinite dimensional integral on the parametric space corresponding to the random coefficients) of linear functionals acting on the solution of our model problem. To estimate the expected value computationally, the infinite expansions of the random parameter need to be truncated. Then we approximate the high-dimensional integral over the random field using a high-order quasi-Monte Carlo  method. This follows by approximating the deterministic solution over the space-time domain via a second-order accurate time-stepping scheme in combination with a spatial discretization by Galerkin finite elements. Under reasonable regularity assumptions on the given data, we show some regularity properties of the continuous solution and investigate the errors from estimating the expected value. We report on numerical experiments that complement the theoretical results.
\end{abstract}
\section{Introduction}
In this work, we are interested in estimating the expected value of the random solution of the following stochastic time-fractional diffusion equation  \cite{Sposinietal2020,Wangetal2020}:
\begin{equation}
\label{eq:fractional equation} 
\partial_t^{\alpha} u(\bsx,t,\bsy)+{\mathcal A} u(\bsx,t,\bsy)=f(\bsx,t) \quad
\text{for $(\bsx,t)\in \Omega\times (0,T]$,}
\end{equation}
subject to a homogeneous Dirichlet boundary condition, $u(\bsx,t)= 0$ for $(\bsx,t)\in\partial\Omega\times (0,T]$, and the initial condition $u(\bsx,0,\bsy)=g(\bsx)$ for $\bsx \in\Omega$. Here $T>0,$  $\Omega\subset \mathbb{R}^d$ (with $d=1,2,3)$ is a convex polyhedral domain and  $0<\alpha<1$.  The fractional-order time derivative operator~$\partial_t^\alpha$ is of Caputo type, that is,
\[ \partial_t^{\alpha} v(t):=\I^{1-\alpha}v'(t)
=\int_0^t\omega_{1-\alpha}(t-s)v'(s)\,\ud s\quad\text{with}
\quad \omega_{1-\alpha}(t):=\frac{t^{-\alpha}}{\Gamma(1-\alpha)},\]
where  $\I^{1-\alpha}$ is the Riemann--Liouville fractional integral operator with respect to~$t$ and where $v'=\partial v/\partial t$. ($\Gamma$ denotes the usual gamma function.)  In \eqref{eq:fractional equation}, ${\mathcal A}$ is the spatial elliptic stochastic operator defined by 
\[{\mathcal A}w(\bsx,\bsy)=-\nabla\cdot(\kappa(\bsx,\bsy)\nabla w(\bsx,\bsy))\,,\]
where the uncertainty in the diffusivity~$\kappa$ is expressed in the expansion 
\begin{equation}\label{KLexpansion}
\kappa(\bsx, \bsy) = \kappa_0(\bsx) +\sum_{j=1}^\infty y_j \psi_j(\bsx),
\end{equation}
with $\bsy= (y_j)_{j\ge1}\in \mathfrak{D}:=(-\frac{1}{2},\frac{1}{2})^\N$ consisting of a countable number of parameters $y_j$,  which are assumed to be i.i.d. uniformly distributed.  The functions in the sequence $\{\psi_j\}$ are assumed to be independent in the space $L^2(\Omega)$ (if the functions $\psi_j$ are derived from a Karhunen-Lo\'eve expansion than they are even orthogonal). The function  $\kappa_0$ is the mean of the random field $\kappa$ with respect to the random variables $\bsy$.

To ensure that  $\kappa$ is well-defined for all parameter vectors $\bsy \in \mathfrak{D}$, we impose the conditions 
\begin{equation}\label{ass A1}
 \kappa_0   \in L^\infty(\Omega) \quad {\rm and}\quad  \sum_{j=1}^\infty \|\psi_j\|_{L^\infty(\Omega)} < \infty,
\end{equation}
and to ensure the well-posedness of problem \eqref{eq:fractional equation}, we assume there exist constants $\kappa_{\min}$~and $\kappa_{\max}$ such that
\begin{equation}\label{eq: kappa min max}
0  < \kappa_{\min} \le \kappa \le  \kappa_{\max}<\infty
\quad\text{on $\Omega \times \mathfrak{D}$.}
\end{equation}

For a deterministic diffusivity $\kappa=\kappa(\bsx)$, well-posedness, regularity properties, and numerical solutions for subdiffusion problems  of the form \eqref{eq:fractional equation} were studied very extensively over the last fifteen years \cite{JinZhou2023Book,Mustapha2018,Mustapha2020}. Furthermore, the inverse problem of recovering/identifying the diffusion coefficient $\kappa$  has attracted much interest \cite{ChengNakagawaYamamotoYamazaki2009,
JinZhou2023,LiGuJia2012,LiYamamoto2019,WeiLi2018,Zhang2016}. However, and to the best of our knowledge, the case of a random diffusivity coefficient $\kappa=\kappa(\bsx,\bsy)$ has not been studied so far,  despite its practical relevance.

For a continuous linear functional $\calL:L^2(\Omega)\to \R$ and for each fixed time $t \in (0,T],$  we are interested in efficient  approximation of the  expected value of $\calL(u(t))$, that is,
\begin{equation}\label{F} 
E\bigl(\calL(u(t))\bigr):=\int_{\mathfrak{D}}\calL\big( u(\cdot,t,\bsy)\big)\,\ud\bsy.
\end{equation}
Here, $\ud\bsy$ is the uniform probability measure on~$\mathfrak{D}$, and $u(\cdot,t,\bsy)$ is the random solution of \eqref{eq:fractional equation} at time $t$. 

To estimate $E(\calL(u(t))),$  we have to deal with three different sources of errors: a dimension truncation error from truncating the  infinite expansions in\eqref{KLexpansion}; a sampling error from approximating the expected value; and the error from approximating the continuous solution over the space-time domain $\Omega\times (0,T)$. Here, we rely on the Galerkin finite element method (FEM) for the spatial discretization, combined with a second-order accurate time-stepping method (over graded meshes) from our recent paper \cite{MMD2024}. Noting that, due to the presence of the non-local Caputo derivative 
in~\eqref{eq:fractional equation}, the solution $u$ suffers from a weak singularity at $t=0$ even when the given data are smooth. This has a direct impact on the accuracy, and consequently the convergence rates, of the numerical methods in both variables, time and space. To overcome this delicate issue, different approaches were applied, including corrections, graded meshes, and convolution quadrature. 

The second source of error is from approximating the integral over the parameter space $\mathfrak{D}$. Here, we rely on a  high-order  quasi-Monte Carlo (QMC) method \cite{Dick2008,DickKuoGiaNuynsSchwab2014}. In Section \ref{VFR}, we recall some known regularity properties of the continuous solution $u$ of \eqref{eq:fractional equation}, where we require $\kappa(\cdot,\bsy)\in W^{1,\infty}(\Omega)$ for  every $\bsy\in\mathfrak{D}$. To have this, we additionally assume that \begin{equation}\label{ass A4}
\kappa_0  \in W^{1,\infty}(\Omega)\quad\text{and}\quad
\sum_{j=1}^\infty \|\nabla \psi_j\|_{L^\infty(\Omega)}<\infty.
\end{equation} We also show in Subsection \ref{VFR xt} some other regularity estimates of $u$ with respect to $\bsx$ and $t$.  In addition, we show in Subsection \ref{VFR y} some regularity properties of $u$  with respect to the random vector $\bsy$. These properties are required for our error analysis.

Since we cannot sample directly from the infinite sum in \eqref{KLexpansion}, we consider the truncation $\widehat \kappa(\bsx, \bsy)=\kappa(\bsx, \widehat \bsy)$, where $\widehat \bsy=(y_1,\ldots,y_z,0,\ldots)$ is the truncated vector of~$\bsy$ for some~$z\ge 1$,  obtained from $\bsy$ by setting $y_j=0$ for $j>z$. The error from truncating the infinite series expansion  in \eqref{KLexpansion} is investigated in Theorem \ref{Truncating error}. To minimize  the errors, the sequence of functions~$\{\psi_j\}_{j\ge 1}$ is ordered so that
\begin{equation}\label{ass A5}
\| \psi_j \|_{L^\infty(\Omega)} \ge
\| \psi_{j+1} \|_{L^\infty(\Omega)},\quad\text{for $j\ge 1$,}
\end{equation}
or in other words, so that the sequence  $\{\|\psi_j\|_{L^\infty(\Omega)}\}_{j\ge 1}$ is  nonincreasing. We also require that 
\begin{equation}\label{ass A3}
\sum_{j=1}^\infty \|\psi_j\|^p_{L^\infty(\Omega)} < \infty,\quad
\text{for some~$p$ satisfying $0<p<1$.}
\end{equation}
Then, an application of the Stechkin inequality  
\[
\sum_{j \ge s+1} \delta_j\le C_\varsigma\,
s^{1-\frac{1}{\varsigma}}\Big(\sum_{j \ge 1} \delta_j^\varsigma\Big)^{\frac{1}{\varsigma}},\quad{\rm for}~~0< \varsigma < 1,
\]
where $\{\delta_j\}_{j\ge1}$ is a nonincreasing sequence of positive numbers, leads to 
\begin{equation}\label{error in kappa}
|\kappa(\bsx,\bsy) - \widehat \kappa(\bsx, \bsy)| 
\le \frac{1}{2} \sum_{j=z+1}^\infty \|\psi_j\|_{L^\infty(\Omega)} \le C_p\, z^{1-1/p},
\quad\text{for $(\bsx,\bsy)\in\Omega\times\mathfrak{D}$.}
\end{equation}

In Section~\ref{sec: FEM} we approximate the solution $u(\cdot,\cdot,\bsy)$ of \eqref{eq:fractional equation} over $\Omega\times (0,T)$ by using a Galerkin FEM in space, combined with a second-order time-stepping scheme. To improve the accuracy, the time mesh is graded near the origin to compensate for the singular behavior of the continuous solution. The matrix implementation of the numerical scheme is discussed briefly. Section \ref{Sec: main results} introduces a high-order QMC rule and establishes our main results.  To support our theoretical finding, we present some numerical results in Section \ref{Sec: Numeric}.
\section{Regularity properties}\label{VFR}
This section is dedicated to the regularity properties of the continuous solution $u$ of \eqref{eq:fractional equation} with respect to the spatial  variable~$\bsx$, the time variable $t$, and the random variable $\bsy$. 
We will use the following spaces and notations.  The inner product and norm in $L^2(\Omega)$ are written as $\iprod{\cdot,\cdot}$~and $\|\cdot\|$, respectively. For $\ell \ge 1,$ the norm on the Sobolev space $H^\ell(\Omega)$ is denoted by $\|\cdot\|_\ell$, and we let
\[
V:=H^1_0(\Omega)=\{ w \in H^1(\Omega): w|_{\partial \Omega} = 0 \}. 
\]
For the half-open time interval~$J=(0,T]$, we will write the norm 
in~$L^2(J,L^2(\Omega))$ as
\[
\|f\|_{L^2(J,\Omega)}=\biggl(\int_J\|f(\cdot,t)\|^2\,\ud t\biggr)^{1/2}.
\]
Our analyses in this and the forthcoming sections use the following technical lemmas. 

The proof of the next lemma follows from a result of Mustapha and Sch\"otzau~\cite[Lemma 3.1~(iii)]{MustaphaSchoetzau2014} and the inequality $\cos((1-\alpha)\pi/2)\geq \alpha$ for $0<\alpha<1.$

\begin{lemma}\label{lem: continuity}
For $0<\alpha<1$ and for $\epsilon >0,$ we have 
\[
\int_0^t\iprod{\I^{1-\alpha}\phi,\psi}\,\ud s
\leq \epsilon \int_0^t\iprod{\I^{1-\alpha}\phi,\phi}\,\ud s 
+\frac{1}{4\epsilon \alpha^2}\int_0^t\iprod{\I^{1-\alpha}\psi,\psi}\,\ud s,
\quad\text{for $\epsilon>0$.}\]
\end{lemma}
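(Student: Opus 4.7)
The plan is to reduce the claim to a Cauchy--Schwarz-type inequality for the bilinear form $(\phi,\psi)\mapsto \int_0^t\iprod{\I^{1-\alpha}\phi,\psi}\,\ud s$, which is exactly what the cited Lemma 3.1(iii) of Mustapha--Sch\"otzau provides. That lemma asserts (for real-valued $\phi,\psi$ in the appropriate weighted $L^2$ space on $(0,t)$) an estimate of the form
\[
\int_0^t\iprod{\I^{1-\alpha}\phi,\psi}\,\ud s
\leq \frac{1}{\cos((1-\alpha)\pi/2)}
\left(\int_0^t\iprod{\I^{1-\alpha}\phi,\phi}\,\ud s\right)^{1/2}
\left(\int_0^t\iprod{\I^{1-\alpha}\psi,\psi}\,\ud s\right)^{1/2},
\]
which in turn rests on the fact that $\I^{1-\alpha}$ is sectorial with angle $(1-\alpha)\pi/2$, so that the symmetric form on the left is bounded relative to the coercive diagonal quantities on the right, with sharp constant $1/\cos((1-\alpha)\pi/2)$.

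Given that inequality, the first step I would take is to abbreviate the two nonnegative quantities as
\[
A^2:=\int_0^t\iprod{\I^{1-\alpha}\phi,\phi}\,\ud s,\qquad
B^2:=\int_0^t\iprod{\I^{1-\alpha}\psi,\psi}\,\ud s,
\]
so that the cited bound reads $\int_0^t\iprod{\I^{1-\alpha}\phi,\psi}\,\ud s\leq AB/\cos((1-\alpha)\pi/2)$. The second step is to apply the weighted Young inequality $ab\leq \epsilon a^2+b^2/(4\epsilon)$ to the right-hand side, choosing $a=A$ and $b=B/\cos((1-\alpha)\pi/2)$, which gives
\[
\int_0^t\iprod{\I^{1-\alpha}\phi,\psi}\,\ud s
\leq \epsilon A^2 + \frac{B^2}{4\epsilon\cos^2((1-\alpha)\pi/2)}.
\]

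The third and final step is to replace $\cos^2((1-\alpha)\pi/2)$ in the denominator by the simpler quantity $\alpha^2$. For this I would invoke the elementary inequality $\cos((1-\alpha)\pi/2)\geq \alpha$ for $0<\alpha<1$ (equivalently $\sin(\alpha\pi/2)\geq\alpha$, which follows from concavity of $\sin$ on $[0,\pi/2]$), so that $1/\cos^2((1-\alpha)\pi/2)\leq 1/\alpha^2$. Substituting back yields the stated bound. There is really no main obstacle: the two-line proof is essentially a bookkeeping exercise that combines the cited sectoriality estimate with Young's inequality and an elementary trigonometric bound; the only thing one must be careful about is the direction and constant in Young's inequality to land on exactly $1/(4\epsilon\alpha^2)$.
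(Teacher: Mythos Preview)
Your proposal is correct and matches the paper's approach exactly: the paper states that the lemma follows from Mustapha--Sch\"otzau \cite[Lemma~3.1(iii)]{MustaphaSchoetzau2014} together with the inequality $\cos((1-\alpha)\pi/2)\geq\alpha$, and you have simply written out the intermediate Young's inequality step that bridges the two.
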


The next estimate was proved by McLean et al.~\cite[Lemma 2.3]{McLeanEtAl2019}.
\begin{lemma}\label{lem: tech 2}
For $0<\alpha<1$, if the function $\phi: [0,T] \to L_2(\Omega)$ is continuous with $\phi(0)=0$, and if its restriction to $(0,T]$ is piecewise diﬀerentiable with $\phi(t)\le C t^{-\mu}$ for $0<t\le T$ and some nonnegative 
constant~$\mu <1-\alpha$, then
\[
\|\phi(t)\|^2\leq 2\omega_{1+\alpha}(t)\int_0^t\iprod{\I^{1-\alpha}\phi',\phi'}\,\ud s,
\quad\text{for $t>0$.}
\]
\end{lemma}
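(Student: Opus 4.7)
Proof proposal for Lemma~\ref{lem: tech 2}:

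The plan is to exploit the Caputo/Riemann--Liouville semigroup identity to express~$\phi(t)$ as a fractional integral of a quantity naturally tied to the quadratic form on the right-hand side, and then to apply a weighted Cauchy--Schwarz inequality followed by a coercivity-type estimate for~$\I^{1-\alpha}$.

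First I would introduce the auxiliary function $\psi(s):=(\I^{1-\alpha}\phi')(s)$ and use $\phi(0)=0$ together with $\I^\alpha\I^{1-\alpha}=\I^1$ to write
\[
\phi(t) = (\I^\alpha\psi)(t) = \int_0^t \omega_\alpha(t-s)\,\psi(s)\,\ud s.
\]
The hypothesis $\phi'(t)\le Ct^{-\mu}$ with $\mu<1-\alpha$ guarantees that $\psi$ is continuous on $[0,T]$ with $\psi(0)=0$, and makes all forthcoming interchanges of integration legitimate.

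Next, apply the Cauchy--Schwarz inequality in the Bochner sense with weight $\omega_\alpha(t-s)$ and use $\int_0^t\omega_\alpha(t-s)\,\ud s=\omega_{1+\alpha}(t)$ to obtain
\[
\|\phi(t)\|^2 \le \Bigl(\int_0^t\omega_\alpha(t-s)\,\ud s\Bigr)\Bigl(\int_0^t\omega_\alpha(t-s)\|\psi(s)\|^2\,\ud s\Bigr)
=\omega_{1+\alpha}(t)\int_0^t\omega_\alpha(t-s)\|\psi(s)\|^2\,\ud s.
\]
This already isolates the prefactor $\omega_{1+\alpha}(t)$ and reduces the lemma to the auxiliary coercivity estimate
\[
\int_0^t \omega_\alpha(t-s)\,\|\psi(s)\|^2\,\ud s \;\le\; 2\int_0^t\langle\psi(s),\phi'(s)\rangle\,\ud s .
\]

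The remaining step is the technical core. Since $\psi(0)=0$ one has $\phi'=\I^\alpha\psi'$, and the right-hand side of the coercivity estimate can be transformed, after using the adjoint relation for~$\I^\alpha$ and an integration by parts in which the boundary contributions vanish (because $\psi(0)=0$ and $\omega_{1+\alpha}(0)=0$), into the sum of the left-hand side and the manifestly nonnegative symmetric Riesz-type quadratic form
\[
\iint_{(0,t)^2}\omega_{1+\alpha}(|s-r|)\,\langle\psi'(s),\psi'(r)\rangle\,\ud r\,\ud s \ge 0 .
\]
The hard part is exactly this coercivity step: the pointwise ratio $\omega_\alpha(t-s)/\omega_{1-\alpha}(|s-r|)$ changes sign over $(0,t)^2$, so the inequality is intrinsically spectral and must be obtained by a global rearrangement using Fubini and fractional integration by parts rather than a pointwise kernel comparison. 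Once this coercivity estimate is in place, combining it with the weighted Cauchy--Schwarz bound gives $\|\phi(t)\|^2\le 2\omega_{1+\alpha}(t)\int_0^t\langle\I^{1-\alpha}\phi',\phi'\rangle\,\ud s$, as required.
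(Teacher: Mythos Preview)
The paper does not give its own proof of this lemma; it simply cites \cite[Lemma~2.3]{McLeanEtAl2019}.  Your opening moves---writing $\phi=\I^\alpha\psi$ with $\psi=\I^{1-\alpha}\phi'$, checking that the growth hypothesis on~$\phi'$ forces $\psi(0)=0$, and applying the weighted Cauchy--Schwarz inequality to isolate the factor~$\omega_{1+\alpha}(t)$---are exactly the reduction used in that reference, so the first half of your plan is fine and on the same track.

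The genuine gap is in the ``technical core''.  Two things go wrong.  First, the displayed identity
\[
2\int_0^t\langle\psi,\phi'\rangle\,\ud s
=\int_0^t\omega_\alpha(t-s)\|\psi(s)\|^2\,\ud s
+\iint_{(0,t)^2}\omega_{1+\alpha}(|s-r|)\langle\psi'(s),\psi'(r)\rangle\,\ud r\,\ud s
\]
is not correct: for the scalar test case $\phi'\equiv1$, $\alpha=\tfrac12$, $t=1$ one computes the left side as $8/(3\sqrt{\pi})\approx1.504$, whereas the two terms on the right sum to $16/(3\pi^{3/2})+4/(3\sqrt{\pi})\approx1.710$.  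Second, even if such an identity held, the Riesz-type form with kernel $\omega_{1+\alpha}(|s-r|)\propto|s-r|^\alpha$ is \emph{not} ``manifestly nonnegative'': for $0<\alpha<1$ the distributional Fourier transform of $|t|^\alpha$ is a negative multiple of $|\xi|^{-1-\alpha}$, so on mean-zero inputs the form is nonpositive, and in general it is indefinite.  A further technical issue is that your rewriting $\phi'=\I^\alpha\psi'$ assumes $\psi=\I^{1-\alpha}\phi'$ is differentiable, which the stated hypotheses on~$\phi$ do not provide.

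The argument in the cited reference establishes the needed coercivity inequality without introducing~$\psi'$ or a Riesz form; it proceeds instead through the completely monotone representation $\omega_{1-\alpha}(t)=\frac{\sin\pi\alpha}{\pi}\int_0^\infty\lambda^{\alpha-1}e^{-\lambda t}\,\ud\lambda$, which reduces the comparison, for each fixed~$\lambda$, to an elementary energy identity for the function $s\mapsto\int_0^s e^{-\lambda(s-r)}\phi'(r)\,\ud r$.  I would recommend replacing your second step by an argument along those lines.
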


\subsection{Regularity with respect to ${\bsx}$ and $t$}\label{VFR xt}
 In this subsection, we state some known regularity properties of $u$ with respect to the variables $\bsx$ (space)~and $t$ (time). These properties are required to guarantee the convergence of our numerical solution on $\Omega$ for different time levels and for every $\bsy \in \mathfrak{D}$. Moreover, we show some other regularity properties that are needed for showing regularity with respect to the random variable~$\bsy$ (see 
 Theorem~\ref{lem: vu bound y}) and which are also needed for estimating the error from truncating the infinite series in~\eqref{KLexpansion} in the forthcoming section.     

The results of Jin et al.~\cite[Theorems 1 and 2]{JinLiZhou2020} and of Sakamoto and Yamamoto~\cite[Theorems 2.1 and 2.2]{SakamotoYamamoto2011} imply that for 
every $\bsy \in \mathfrak{D}$, and for $0\le \ell \le 1$,   
\begin{equation}\label{Reg x and t}
\|\nabla u(t,\bsy)\|+\|\nabla \partial_t {\mathfrak U}(t,\bsy)\|\\
\le Ct^{-\alpha/2} {\mathcal R}^\ell(t),\quad\text{for $t>0$,}
\end{equation}
provided that the assumptions \eqref{ass A1} and \eqref{ass A4} are satisfied. Here,
\begin{equation}\label{eq: frak U cal R}
\mathfrak{U}(t,\bsy)=tu(t,\bsy)~\text{and}~
\mathcal{R}^\ell(t)=t^{\alpha\ell/2}\|g\|_{\dot H^\ell(\Omega)}+t^{\alpha}\|f(0)\|
+t^{1+\alpha/2}\I^{\alpha/2}(\|f'\|)(t).
\end{equation}
In the next lemma, we prove different technical inequalities that will be used to show a required regularity property of $u$ with respect to the random variable $\bsy$ in the forthcoming subsection. For the error analysis from the time discretization, for every  $\bsy \in \mathfrak{D}$,  we assume that 
\begin{equation}\label{time regularity}
t\|u'(t,\bsy)\|+t^2\|u''(t,\bsy)\|+t^3\|u'''(t,\bsy)\|\le C t^\sigma,\quad
\text{for $t>0$,}
\end{equation}
and for some positive~$\sigma$. In addition, for the error analysis from the Galerkin FEM, we impose that for $t>0,$ 
\begin{equation}\label{spatial regularity}
\|u'(t,\bsy)\|_2+ t\|u''(t,\bsy)\|_2\le Ct^{\nu-1},
\end{equation}
for some constant~$\nu > 0$. For instance, if $f\equiv 0$ and $g \in \dot H^b(\Omega)$ for some $0\le b \le 2$, then \eqref{time regularity} and \eqref{spatial regularity} hold true for $\sigma=b\alpha/2$ and $\nu=(b/2-1)\alpha$ 
\cite[Theorems 4.2 and 4.4]{McLean2010}.  

Consistent with our earlier notation for the fractional integral, we write $$\mathcal{I}v(t)=\mathcal{I}^1v(t)=\int_0^tv(s)\,\ud s.$$
\begin{lemma}\label{lem: reg bound}
For a given space-time dependent $d$-dimensional vector function~$\theta$, let $w$ be the solution of the deterministic  time-fractional diffusion equation 
\begin{equation}\label{eq: 1 truncate n}
\partial_t^\alpha w+\mathcal{A} w= -\nabla\cdot\theta\quad\text{on $\Omega\times(0,T]$,}
\end{equation}
subject to homogeneous Dirichlet boundary conditions, $w=0$ on $\partial\Omega\times (0,T]$, and zero initial condition, $w(\bsx,0)=0$ for $\bsx\in\Omega$. Then, with $\W  (x,t)=t w(x,t)$ and $\Theta(\bsx,t)=t \theta(\bsx,t)$, we have  for $t>0$,    
\begin{align}
\I(\|\sqrt{\kappa}\nabla w\|^2)(t)&\le\I(\|\theta/\sqrt{\kappa}\|^2)(t),
\label{eq: bounda of nabla w} \\
\I \big(\|\sqrt{\kappa} \nabla \W'  \|^2\big)(t)
    &\le 2\I \big(\|\theta/\sqrt{\kappa}\|^2+\|\Theta'/\sqrt{\kappa}\|^2\big)(t),
    \label{eq: bound of nabla v'}\\
\|\sqrt{\kappa} \nabla w(t)\|^2
&\le 2t^{-1}\I\big(\|\theta/\sqrt{\kappa}\|^2+\|\Theta'/\sqrt{\kappa}\|^2\big)(t),
\label{eq: bound of nabla v} \\
\|w(t)\|^2&\le Ct^{\alpha-1}\I\bigl(
    \|\theta/\sqrt{\kappa}\|^2+\|\Theta'/\sqrt{\kappa}\|^2\bigr)(t). \label{eq: desired 4}
\end{align}
\end{lemma}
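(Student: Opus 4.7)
My plan is to prove the four estimates sequentially, each building on the earlier ones. For \eqref{eq: bounda of nabla w}, I would take the inner product of the PDE $\partial_s^\alpha w+\mathcal{A}w=-\nabla\cdot\theta$ with $w(\cdot,s)$, integrate in time over $(0,t)$, and use the Dirichlet boundary condition to write $\int_0^t\iprod{\mathcal{A}w,w}\,\ud s=\int_0^t\|\sqrt{\kappa}\nabla w\|^2\,\ud s$. The standard positivity $\int_0^t\iprod{\partial_s^\alpha w,w}\,\ud s\ge 0$ (a consequence of $w(0)=0$) eliminates the time-fractional term, and a Young estimate on $\int_0^t\iprod{\theta,\nabla w}\,\ud s$ followed by absorption of the resulting $\|\sqrt{\kappa}\nabla w\|^2$-term yields \eqref{eq: bounda of nabla w}.

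For \eqref{eq: bound of nabla v'}, the key step is to derive a PDE for $\W'$ with zero initial datum. Multiplying the PDE for $w$ by $t$ and using the identity $t\partial_t^\alpha w=\partial_t^\alpha\W-\alpha\I^{1-\alpha}w$ (obtained by splitting $s=t-(t-s)$ inside the memory integral and using $w(0)=0$), one obtains $\partial_t^\alpha\W+\mathcal{A}\W=\alpha\I^{1-\alpha}w-\nabla\cdot\Theta$. Differentiating this in time---the conditions $w(0)=0$ and $\W'(0)=0$ permit interchanging $\partial_t$ with $\I^{1-\alpha}$---and substituting $\I^{1-\alpha}w'=\partial_t^\alpha w=\nabla\cdot(\kappa\nabla w-\theta)$ from the original PDE produces
\[\partial_t^\alpha\W'+\mathcal{A}\W'=-\nabla\cdot F,\qquad F:=-\alpha\kappa\nabla w+\alpha\theta+\Theta'.\]
Testing with $\W'$, using $\W'(0)=0$ together with positivity of the fractional-derivative term and Young's inequality yields $\int_0^t\|\sqrt{\kappa}\nabla\W'\|^2\,\ud s\le\int_0^t\|F/\sqrt{\kappa}\|^2\,\ud s$. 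The pointwise bound $\|F/\sqrt{\kappa}\|^2\le 2\alpha^2\|\sqrt{\kappa}\nabla w\|^2+4\alpha^2\|\theta/\sqrt{\kappa}\|^2+4\|\Theta'/\sqrt{\kappa}\|^2$ together with \eqref{eq: bounda of nabla w} applied to the first term then completes \eqref{eq: bound of nabla v'}.

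For \eqref{eq: bound of nabla v}, since $\W(0)=0$ one has the pointwise identity $\nabla\W(t,\bsx)=\int_0^t\nabla\W'(s,\bsx)\,\ud s$; multiplying by $\sqrt{\kappa(\bsx)}$, applying Cauchy--Schwarz in $s$, and integrating over $\Omega$ give $\|\sqrt{\kappa}\nabla\W(t)\|^2\le t\int_0^t\|\sqrt{\kappa}\nabla\W'\|^2\,\ud s$. Since $\|\sqrt{\kappa}\nabla\W(t)\|^2=t^2\|\sqrt{\kappa}\nabla w(t)\|^2$, dividing by $t^2$ and invoking \eqref{eq: bound of nabla v'} gives \eqref{eq: bound of nabla v}.

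Finally, for \eqref{eq: desired 4} I would apply Lemma~\ref{lem: tech 2} to $\phi=\W$: the hypotheses are met since $\W(0)=0$ and $\|\W(t)\|\le Ct$, so $\|\W(t)\|\le Ct^{-\mu}$ holds with (say) $\mu=0<1-\alpha$. This yields $\|\W(t)\|^2\le 2\omega_{1+\alpha}(t)\int_0^t\iprod{\I^{1-\alpha}\W',\W'}\,\ud s$, i.e.\ $\|w(t)\|^2\le Ct^{\alpha-2}\int_0^t\iprod{\I^{1-\alpha}\W',\W'}\,\ud s$. It then remains to show
\[\int_0^t\iprod{\I^{1-\alpha}\W',\W'}\,\ud s\le Ct\int_0^t\bigl(\|\theta/\sqrt{\kappa}\|^2+\|\Theta'/\sqrt{\kappa}\|^2\bigr)\,\ud s,\]
which I would obtain by testing the PDE for $\W$ with $\W'$ and integrating in time, using Lemma~\ref{lem: continuity} to absorb the cross-term $\alpha\int_0^t\iprod{\I^{1-\alpha}w,\W'}\,\ud s$ into $\int_0^t\iprod{\I^{1-\alpha}\W',\W'}\,\ud s$, then performing a time integration by parts on $\int_0^t\iprod{\Theta,\nabla\W'}\,\ud s$ combined with $\|\Theta(s)\|\le s\|\theta(s)\|\le t\|\theta(s)\|$ and the already-proved \eqref{eq: bounda of nabla w}--\eqref{eq: bound of nabla v'} to close. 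The main obstacle will be precisely this last step: careful bookkeeping of weighted-in-time factors is needed to produce the prefactor $t$ (and hence the final exponent $t^{\alpha-1}$) without loss.
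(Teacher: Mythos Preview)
Your treatment of \eqref{eq: bounda of nabla w} and \eqref{eq: bound of nabla v} matches the paper. For \eqref{eq: bound of nabla v'} you take a genuinely different route: after differentiating the equation for $\W$, you substitute $\partial_t^\alpha w=\nabla\cdot(\kappa\nabla w-\theta)$ from the original PDE to rewrite the forcing as $-\nabla\cdot F$ with $F=-\alpha\kappa\nabla w+\alpha\theta+\Theta'$, and then invoke \eqref{eq: bounda of nabla w} to control the extra $\|\sqrt{\kappa}\nabla w\|^2$ contribution. The paper instead first establishes the intermediate inequality $\I(\iprod{\partial_t^\alpha w,\W'})\le\I(\iprod{\theta,\nabla\W'})$ by testing the original equation with $\tilde w=tw'$ and combining with~\eqref{eq:ww}; this eliminates the $\kappa\nabla w$ contribution before any Young splitting. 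Your route is valid but yields a constant $6$ (more precisely $\max(6\alpha^2,4)$) in place of the $2$ stated in~\eqref{eq: bound of nabla v'}, so it proves only a weaker version of the inequality and propagates a larger constant into~\eqref{eq: bound of nabla v} and into the definition of $b_j$ in Theorem~\ref{lem: vu bound y}.

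For \eqref{eq: desired 4} your outline has a gap. Applying Lemma~\ref{lem: continuity} to $\alpha\I(\iprod{\I^{1-\alpha}w,\W'})$ does \emph{not} absorb the cross-term entirely into $\I(\iprod{\I^{1-\alpha}\W',\W'})$: it produces, in addition, the residual $\I(\iprod{\I^{1-\alpha}w,w})$ on the right-hand side, and none of \eqref{eq: bounda of nabla w}--\eqref{eq: bound of nabla v'} bounds this quantity. The paper handles it by integrating the original equation once in time (using $w(0)=0$ so that $\partial_t^\alpha w=\partial_t\I^{1-\alpha}w$), testing the resulting identity $\I^{1-\alpha}w+\mathcal{A}\I w=\I(-\nabla\cdot\theta)$ with $w$, and integrating again; this yields $\I(\iprod{\I^{1-\alpha}w,w})\le\I(\iprod{\I\theta,\nabla w})$, after which Cauchy--Schwarz together with~\eqref{eq: bound of nabla v} closes the estimate with the required prefactor $t$. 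You will need this (or an equivalent) extra step. Also, the paper does not integrate $\I(\iprod{\Theta,\nabla\W'})$ by parts; it simply uses $|\iprod{\Theta,\nabla\W'}|\le s\|\theta/\sqrt{\kappa}\|\,\|\sqrt{\kappa}\nabla\W'\|$ and \eqref{eq: bound of nabla v'} directly, which is cleaner than the boundary-term route you suggest.
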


\begin{proof}
To show the first desired estimate, we take the inner product of \eqref{eq: 1 truncate n} with~$w$, integrate in time, and then apply the divergence theorem and the Cauchy--Schwarz inequality,  to obtain    
\begin{equation}\label{eq:ww}
\I(\iprod{\partial_t^\alpha w,w}+\|\sqrt{\kappa}\nabla w\|^2)(t)
    =\I(\iprod{\theta, \nabla w})(t).    
\end{equation}
Using 
\[2|\iprod{\theta, \nabla w}|
    \le \|\theta/\sqrt{\kappa}\|^2
    +\|\sqrt{\kappa}\nabla w\|^2,
\]
and then, cancelling the common terms, yields   
\begin{equation}\label{eq: bound of I nabla w}
2\I(\iprod{\partial_t^\alpha w,w})(t)+\I(\|\sqrt{\kappa}\nabla w\|^2)(t)
\le \I( \|\theta/\sqrt{\kappa}\|^2)(t).
\end{equation}
Since $w(\bsx,0)=0$, $\I(\iprod{\partial_t^\alpha w,w})(t)=\I(\iprod{\I^{1-\alpha} w',w})(t)=\I(\iprod{\partial_t (\I^{1-\alpha} w),w})(t)\ge 0$, and therefore, the proof of \eqref{eq: bounda of nabla w} is completed.    

For later use, set $\tilde w(t)=t w'(t)$ and notice $\iprod{\mathcal{A}w(t),\tilde w(t)}=(t/2)(d/dt)\|\sqrt{\kappa}\nabla w(t)\|^2$. By taking the inner product of \eqref{eq: 1 truncate n} with $\tilde w$, integrating in time, and then integrating by parts and multiplying through by~$2$, we get
\begin{multline*}
2\I(\iprod{\partial_t^\alpha w,\tilde w})(t)
+t\|\sqrt{\kappa}\nabla w(t)\|^2-\I(\|\sqrt{\kappa}\nabla w\|^2)(t)\\
=2\I(\iprod{\theta, \nabla \tilde w})=2\I(\iprod{\theta, \nabla \W'- \nabla w})(t).
\end{multline*}
Adding this equation to twice~\eqref{eq:ww}, and using $\W'=w+\tilde w$, yields
\begin{equation}\label{eq:wtildew}
\I(\iprod{\partial_t^\alpha w,\W'})(t)\le \I(\iprod{\theta, \nabla \W'})(t).
\end{equation}
To show \eqref{eq: bound of nabla v'},  we start by multiplying both sides 
of~\eqref{eq: 1 truncate n} by~$t$, then using the identity~\cite[(2.5)]{Mustapha2018}
\[t 
\partial_t^\alpha w(t)=\partial_t^\alpha \W  (t)-\alpha\I^{1-\alpha}w(t)
    -t\omega_{1-\alpha}(t)w(0),
\]
and the zero initial condition on~$w$, to obtain
\begin{equation}\label{v eq}
\partial_t^{\alpha} \W   +{\mathcal A} \W  = \alpha\I^{1-\alpha} w-t\nabla\cdot\theta\,.
\end{equation}
Applying $\partial_t$ to both sides of~\eqref{v eq} and using $\W'(0)=w(0)=0$, we get  
\[
\partial_t^\alpha \W'   +{\mathcal A} \W'  = \alpha \partial_t^\alpha w-\nabla\cdot\Theta'.
\]
Taking the  inner product with $\W'$, then using the divergence theorem, applying $\I$ to both sides, and using \eqref{eq:wtildew}, we deduce that 
\begin{multline*}
\I(\iprod{\partial_t^\alpha \W'  ,\W'  })(t)+\I(\|\sqrt{\kappa} \nabla \W'  \|^2)(t)
=\I\big(\alpha\iprod{\partial_t^\alpha w,\W'  }+\iprod{\Theta',\nabla \W'  }\big)(t)\\
\le \I\big(\iprod{\alpha\theta+\Theta',\nabla \W'  }\big)(t).
\end{multline*}
However, by the Cauchy--Schwarz inequality,
\[
|\iprod{\alpha\theta+\Theta',\nabla \W'  }|
\le \|(\alpha\theta+\Theta')/\sqrt{\kappa}\|\,\|\sqrt{\kappa}\nabla \W'\| 
\le \|\theta/\sqrt{\kappa}\|^2+\|\Theta'/\sqrt{\kappa}\|^2
+\tfrac12\|\sqrt{\kappa}\nabla \W'\|^2,
\]
and thus, after cancelling the common terms,  we reach  
\[2\I(\iprod{\partial_t^\alpha \W'  ,\W'  })(t)+\I(\|\sqrt{\kappa}\nabla \W'  \|^2)(t)
\le 2\I(\|\theta/\sqrt{\kappa}\|^2)(t)+2\I(\|\Theta'/\sqrt{\kappa}\|^2)(t).\]
Since $\I(\iprod{\partial_t^\alpha \W'  ,\W'  })(t)\ge 0$, the proof 
of~\eqref{eq: bound of nabla v'} is completed. 

Because  $w(t)=t^{-1}\W(t)$ and
\[
\|\sqrt{\kappa}\nabla\W(t)\|^2=\|\sqrt{\kappa}\nabla(\W(t)-\W(0))\|^2
\le \big(\I(\|\sqrt{\kappa} \nabla \W'\|)(t)\big)^2
\le t\I(\|\sqrt{\kappa}\nabla\W'\|^2)(t),
\]
the estimate in~\eqref{eq: bound of nabla v}  follows immediately from \eqref{eq: bound of nabla v'}.

The main focus is now to prove \eqref{eq: desired 4}.  By Lemma~\ref{lem: tech 2},
\begin{equation}\label{eq: w squared}
\|w(t)\|^2=t^{-2}\|\W(t)\|^2\le\frac{2t^{\alpha-2}}{\Gamma(1+\alpha)}\,
\I(\iprod{\partial_t^\alpha\W,\W'})(t),
\end{equation}
and we proceed to estimate the right-hand side. Taking the  inner product of \eqref{v eq} with $\W'$, then applying the divergence theorem, integrating in time, and using $\W  (0)=0$, we conclude that 
\begin{equation}\label{eq: desired 4 step}
\I(\iprod{\partial_t^\alpha\W,\W'})(t)+\tfrac12\|\sqrt{\kappa}\nabla\W(t)\|^2
=\I\big(\alpha\iprod{\I^{1-\alpha}w,\W'}+\iprod{\Theta,\nabla\W'}\big)(t).
\end{equation}
By applying Lemma~\ref{lem: continuity} with $\epsilon=\frac{1}{2\alpha}$, and noting
that $\I^{1-\alpha}\W'=\partial_t^\alpha\W$, we see that
\[
\alpha\I\bigl(\iprod{\I^{1-\alpha}w,\W'}\bigr)(t)
\le\tfrac12\I\bigl(\iprod{\I^{1-\alpha}w,w}\bigr)(t)
    +\tfrac12\I\bigl(\iprod{\partial_t^\alpha\W,\W'}\bigr)(t),
\]
and using the Cauchy--Schwarz inequality followed by~\eqref{eq: bound of nabla v'}, 
\begin{align*}
\I\bigl(\iprod{\Theta,\nabla\W'}\bigr)(t)
&\le\int_0^t s\|\theta(s)/\sqrt{\kappa}\|\|\sqrt{\kappa}\nabla\W'(s)\|\,\ud s\\
&\le\frac12\int_0^t s\bigl(\|\theta(s)/\sqrt{\kappa}\|^2
    +\|\sqrt{\kappa}\nabla\W'(s)\|^2\bigr)\,\ud s\\
&\le\tfrac32t\I\bigl(\|\theta/\sqrt{\kappa}\|^2\bigr)
    +\tfrac12t\I\bigl(\|\Theta'/\sqrt{\kappa}\|^2\bigr).
\end{align*}
After inserting these two bounds into~\eqref{eq: desired 4 step}, cancelling the 
term~$\tfrac12\I\bigl(\iprod{\partial_t^\alpha\W,\W'}\bigr)(t)$, and multiplying 
through by~$2$, it follows that
\begin{equation}\label{eq: bound of I alpha v n}
\I(\iprod{\partial_t^\alpha\W,\W'})(t)+\|\sqrt{\kappa} \nabla \W  (t)\|^2
\le \I(\iprod{\I^{1-\alpha} w,w})(t)
+t\I\bigl(3\|\theta/\sqrt{\kappa}\|^2+\|\Theta'/\sqrt{\kappa}\|^2\bigr)(t).
\end{equation}
To estimate $\I(\iprod{\I^{1-\alpha} w,w})(t)$, we note that
$\partial_t^\alpha w(t)=\partial_t\I^{1-\alpha}w(t)$ because $w(0)=0$, so after
integrating \eqref{eq: 1 truncate n} in time and then taking the inner product with~$w$,
\[ 
\iprod{\I^{1-\alpha}w,w}+\iprod{\kappa \nabla \I w,\nabla w}
    = \iprod{\I\theta, \nabla w}.
\]
Since $\iprod{\kappa\nabla\I w,\nabla w}=(d/dt)\|\sqrt{\kappa}\nabla\I w\|^2$, by again
integrating in time, and then using $\I w(0)=0$ followed by the Cauchy--Schwarz inequality,
\begin{align*}
\I(\iprod{\I^{1-\alpha} w,w})(t)+\tfrac12\|\sqrt{\kappa} \nabla \I w\|^2
&\le \I\bigl(\|\I\theta/\sqrt{\kappa}\|\|\sqrt{\kappa}\nabla w\|\bigr)(t)\\
&\le\frac12\int_0^t\bigl(s^{-1}\|\I\theta/\sqrt{\kappa}\|^2
    +s\|\sqrt{\kappa}\nabla w\|^2\bigr)\,\ud s.
\end{align*}
Inserting this inequality in~\eqref{eq: bound of I alpha v n} and observing that,
by~\eqref{eq: bound of nabla v}, 
\[
\frac12\int_0^ts\|\sqrt{\kappa}\nabla w\|^2\,\ud s
    \le t\I\bigl(\|\theta/\sqrt{\kappa}\|^2+\|\Theta'/\sqrt{\kappa}\|^2\bigr)(t),
\]
we arrive at
\[
\I(\iprod{\partial_t^\alpha\W,\W'})(t)
\le2t\I\bigl(2\|\theta/\sqrt{\kappa}\|^2+\|\Theta'/\sqrt{\kappa}\|^2\bigr)(t)
    +\frac12\int_0^ts^{-1}\|\I\theta/\sqrt{\kappa}\|^2\,\ud s.
\]
Furthermore, $\|\I\theta(s)/\sqrt{\kappa}\|^2
\le(\int_0^s\|\theta(q)/\sqrt{\kappa}\|\,\ud q)^2
\le s\int_0^s\|\theta(q)/\sqrt{\kappa}\|^2\,\ud q$, so
\[
\frac12\int_0^ts^{-1}\|\I\theta/\sqrt{\kappa}\|^2\,\ud s
\le\frac{1}{2}\int_0^t\int_0^s\|\theta(q)/\sqrt{\kappa}\|^2\,\ud q\,\ud s
\le\tfrac{t}2 \I(\|\theta/\sqrt{\kappa}\|^2)(t).
\]
The estimate~\eqref{eq: desired 4} now follows from~\eqref{eq: w squared}.
\end{proof}

\subsection{Regularity with respect to  ${\bsy}$}\label{VFR y}
In this subsection, we investigate the regularity properties of the parametric solution $u$ of \eqref{eq:fractional equation} with respect to the random variable $\bsy$.  These properties are needed to guarantee the convergence of the QMC method. For convenience,  we introduce the following notations: let  ${\calS}$ be the set of infinite  vectors~$\bbeta=(\beta_j)_{j\ge 1}$ with nonnegative integer entries such that   $|\bbeta|:=\sum_{j\ge 1} \beta_j<\infty$. That is, sequences of nonnegative integers for which only finitely many entries are nonzero. 
For~$\bbeta=(\beta_j)_{j\ge 1}\in\calS$, define the mixed partial derivative operator
\[
\partial_{\bsy}^{\bbeta}
:= \frac{\partial^{|\bbeta|}}{\partial_{y_1}^{\beta_1}\partial_{y_2}^{\beta_2}\cdots}\,. 
\]
Recall the notation \eqref{eq: frak U cal R}, and for convenience, for $0\le \ell\le 1,$ we set 
\begin{equation}\label{eq: E(t)}
\mathcal{E}^\ell(t)=\Big(t^{\alpha-1}\int_0^t s^{-\alpha} (\mathcal{R}^\ell(s))^2\,ds\Big)^{1/2}.
\end{equation}
\begin{theorem}\label{lem: vu bound y}
Assume that \eqref{ass A1} and \eqref{ass A5}  are satisfied. Then, for every $\bsy\in \mathfrak{D}$ and $\bbeta\in\calS$, the parametric solution $\vu(\cdot,\cdot,\bsy)$ of the problem \eqref{eq:fractional equation} satisfies
\begin{equation}\label{mixed estimate}
\I({\mathcal G}^{\bbeta})(t) 
\le (|\bbeta|!)^2 {\bf b}^{2\bbeta} \I({\mathcal G}^{\bf 0})(t),
\quad\text{for $t>0$,}
\end{equation} 
where 
\[
\mathcal{G}^{\bbeta}(t)=\|\sqrt{\kappa}\nabla(\partial_{\bsy}^{\bbeta} u(t,\bsy))\|^2
    +\|\sqrt{\kappa}\nabla(\partial_{\bsy}^{\bbeta} {\mathfrak U}'(t,\bsy))\|^2
\]
with
\[
{\bf b}^{\bbeta}=\prod_{i\ge 1}  b_i^{\beta_i}\quad\text{and}\quad
b_j=\frac{\sqrt{2}}{\kappa_{\min}}\,\|\psi_j\|_{L^\infty(\Omega)}.
\]
Furthermore, for $t>0$, by applying the regularity properties \eqref{eq: bound of nabla v} and \eqref{eq: desired 4}, and also the regularity property in \eqref{Reg x and t}, we obtain the following  pointwise-in-time regularity estimate: for $0<t<T$ and for $0\le \ell \le 1$,
 \begin{multline}\label{mixed estimate 2}
\|\partial_{\bsy}^{\bbeta} u(t,\bsy)\|
+t^{\alpha/2}\|\sqrt{\kappa}\nabla(\partial_{\bsy}^{\bbeta} u(t,\bsy))\|
    \le  t^{(\alpha-1)/2} |\bbeta|!
{\bf b}^{\bbeta} \bigl(\I({\mathcal G}^{\bf 0})(t)\bigr)^{1/2}\\
=t^{(\alpha-1)/2} |\bbeta|! {\bf b}^{\bbeta} \biggl(\int_0^t\bigl[
\|\sqrt{\kappa}\nabla u(s,\bsy)\|^2+\|\sqrt{\kappa}\,\nabla\mathfrak{U}'(s,\bsy)\|^2
    \bigr]\,\ud s\biggr)^{1/2} \\
\le C\,t^{(\alpha-1)/2} |\bbeta|! {\bf b}^{\bbeta} \biggl(
\int_0^t s^{-\alpha}(\mathcal{R}^\ell(s))^2\,\ud s\biggr)^{1/2}=C\,|\bbeta|! {\bf b}^{\bbeta} {\color{blue}\mathcal{E}^\ell(t)}.
\end{multline}
\end{theorem}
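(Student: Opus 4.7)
The plan is to prove \eqref{mixed estimate} by induction on $|\bbeta|$; the base case $\bbeta=\mathbf{0}$ is trivial. For the inductive step, I would differentiate \eqref{eq:fractional equation} by $\partial_{\bsy}^{\bbeta}$. Because $\kappa$ is affine in each $y_j$, its $\bsy$-partial derivatives of order $\ge 2$ vanish, so the Leibniz rule collapses to
\[
\partial_t^{\alpha}(\partial_{\bsy}^{\bbeta}u)-\nabla\cdot\bigl(\kappa\nabla(\partial_{\bsy}^{\bbeta}u)\bigr)=-\nabla\cdot\theta,\qquad \theta:=-\sum_{j:\beta_j\ge1}\beta_j\,\psi_j\,\nabla(\partial_{\bsy}^{\bbeta-\mathbf{e}_j}u),
\]
where $\mathbf{e}_j$ is the $j$-th unit multi-index. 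Since the initial datum $g$ and the homogeneous Dirichlet boundary condition are $\bsy$-independent, $w:=\partial_{\bsy}^{\bbeta}u$ vanishes at $t=0$ and on $\partial\Omega$, so $w$ fits exactly the framework of Lemma~\ref{lem: reg bound} with the above source $\theta$. Moreover $\W=tw=\partial_{\bsy}^{\bbeta}\mathfrak{U}$ and $\W'=\partial_{\bsy}^{\bbeta}\mathfrak{U}'$, so $\mathcal{G}^{\bbeta}(t)=\|\sqrt{\kappa}\nabla w(t)\|^2+\|\sqrt{\kappa}\nabla\W'(t)\|^2$.

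Adding~\eqref{eq: bounda of nabla w} to~\eqref{eq: bound of nabla v'} then gives $\I(\mathcal{G}^{\bbeta})(t)\le C\,\I\bigl(\|\theta/\sqrt{\kappa}\|^2+\|\Theta'/\sqrt{\kappa}\|^2\bigr)(t)$, where $\Theta:=t\theta=-\sum\beta_j\psi_j\nabla\partial_{\bsy}^{\bbeta-\mathbf{e}_j}\mathfrak{U}$. To engage the induction hypothesis, I would combine the pointwise bound $|\psi_j|/\kappa\le\|\psi_j\|_{L^\infty(\Omega)}/\kappa_{\min}=b_j/\sqrt{2}$ with the weighted Cauchy--Schwarz inequality $(\sum_j\beta_j c_j)^2\le|\bbeta|\sum_j\beta_j c_j^2$ to obtain
\[
\|\theta/\sqrt{\kappa}\|^2\le\frac{|\bbeta|}{2}\sum_{j:\beta_j\ge1}\beta_j\,b_j^2\,\|\sqrt{\kappa}\nabla\partial_{\bsy}^{\bbeta-\mathbf{e}_j}u\|^2,
\]
and the identical bound for $\|\Theta'/\sqrt{\kappa}\|^2$ with $u$ replaced by $\mathfrak{U}'$. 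Adding the two recognises the right-hand side as $(|\bbeta|/2)\sum_j\beta_j b_j^2\,\mathcal{G}^{\bbeta-\mathbf{e}_j}$, so $\I(\mathcal{G}^{\bbeta})(t)\le C|\bbeta|\sum_j\beta_j b_j^2\,\I(\mathcal{G}^{\bbeta-\mathbf{e}_j})(t)$. The induction hypothesis gives $\I(\mathcal{G}^{\bbeta-\mathbf{e}_j})\le((|\bbeta|-1)!)^2\,{\bf b}^{2\bbeta}b_j^{-2}\,\I(\mathcal{G}^{\mathbf{0}})$; the $b_j^{-2}$ cancels the $b_j^2$ under the sum, leaving $\sum_j\beta_j=|\bbeta|$, so that $|\bbeta|\cdot|\bbeta|\cdot((|\bbeta|-1)!)^2=(|\bbeta|!)^2$, establishing \eqref{mixed estimate} (the factor $\sqrt 2$ in the definition of $b_j$ is chosen precisely to absorb the absolute constant emerging from Lemma~\ref{lem: reg bound}).

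The pointwise estimate \eqref{mixed estimate 2} would then follow by a parallel argument in which the pointwise bounds \eqref{eq: bound of nabla v} and \eqref{eq: desired 4} of Lemma~\ref{lem: reg bound} replace the integrated ones: together they yield
\[
\|w(t)\|^2+t^\alpha\|\sqrt{\kappa}\nabla w(t)\|^2\le Ct^{\alpha-1}\,\I\bigl(\|\theta/\sqrt{\kappa}\|^2+\|\Theta'/\sqrt{\kappa}\|^2\bigr)(t),
\]
and estimating the right-hand side exactly as in the induction step above, followed by taking a square root, produces the first inequality in \eqref{mixed estimate 2}. The middle equality is just the definition of $\mathcal{G}^{\mathbf{0}}$ written out, while the final inequality follows from the regularity \eqref{Reg x and t}, which gives $\mathcal{G}^{\mathbf{0}}(s)\le C\,s^{-\alpha}(\mathcal{R}^\ell(s))^2$, matching the definition \eqref{eq: E(t)} of $\mathcal{E}^\ell$.

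The main technical obstacle is the combinatorial bookkeeping in the induction step. The weighted Cauchy--Schwarz $(\sum\beta_j c_j)^2\le|\bbeta|\sum\beta_j c_j^2$ is crucial because it supplies a single factor of $|\bbeta|$; a second factor of $|\bbeta|$ appears as the residual $\sum_j\beta_j$ after the $b_j^{\pm 2}$ cancel. Together these upgrade $((|\bbeta|-1)!)^2$ to exactly $(|\bbeta|!)^2$, and the product structure $\prod_j b_j^{\beta_j}$ is preserved at each step. A coarser estimate—for example a plain triangle inequality before squaring—would produce a quadratic factor of $|\bbeta|$ per differentiation and destroy the required factorial growth that the subsequent QMC analysis depends on.
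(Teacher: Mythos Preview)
Your proposal is correct and follows essentially the same approach as the paper: derive the recurrence~\eqref{recurrence} by $\partial_{\bsy}^{\bbeta}$-differentiation, apply the energy estimates of Lemma~\ref{lem: reg bound} with $\theta=-\sum_j\beta_j\psi_j\nabla(\partial_{\bsy}^{\bbeta-\mathbf{e}_j}u)$, use the weighted Cauchy--Schwarz inequality $(\sum_j\beta_j c_j)^2\le|\bbeta|\sum_j\beta_j c_j^2$ to obtain the one-step recursion $\I(\mathcal{G}^{\bbeta})\le|\bbeta|\sum_j\beta_j b_j^2\,\I(\mathcal{G}^{\bbeta-\mathbf{e}_j})$, and close by induction; the pointwise estimate~\eqref{mixed estimate 2} then follows from \eqref{eq: bound of nabla v}--\eqref{eq: desired 4} together with \eqref{Reg x and t}, exactly as you outline. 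Your write-up is in fact slightly more explicit than the paper's in that you invoke both \eqref{eq: bounda of nabla w} and \eqref{eq: bound of nabla v'} (the paper cites only the latter, though both are needed to control the two summands of $\mathcal{G}^{\bbeta}$) and you track the $1/\sqrt{\kappa}$ weighting carefully.
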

\begin{proof}
Differentiating both sides of~\eqref{eq:fractional equation} with respect to the  variable~$y_j$, we find the following recurrence after a tedious calculation 
\begin{equation}\label{recurrence}
 \partial_t^{\alpha} (\partial_{\bsy}^{\bbeta}u)+{\mathcal A} (\partial_{\bsy}^{\bbeta} u)
=\sum_{\bbeta}\beta_j \nabla\cdot(\psi_j \nabla(\partial_{\bsy}^{\bbeta-{\bf e}_j} u)),
\end{equation}
where $\sum_{\bbeta}:=\sum_{j,\beta_j\ne 0}$, that is, the sum over the nonzero indices of $\bbeta$, and ${\bf e}_j \in \calS$ denotes the vector with entry $1$ in position $j$ and zeros elsewhere.  From the given boundary and initial condition, we deduce that $(\partial_{\bsy}^{\bbeta}u)(\bsx,t,\bsy)=0$ for $(\bsx,t,\bsy)\in \partial\Omega\times(0,T]\times \mathfrak{D}$ and that $(\partial_{\bsy}^{\bbeta}u)(\bsx,0,\bsy)=0$ for~$(\bsx,\bsy) \in \Omega\times\mathfrak{D}$.  Therefore, an application of the estimate \eqref{eq: bound of nabla v'} 
with $\theta=-\sum_{\bbeta}\beta_j \psi_j \nabla(\partial_{\bsy}^{\bbeta-{\bf e}_j} u)$  yields  
\begin{align*}
\I({\mathcal G}^{\bbeta})(t) &\le 2\int_0^t \Bigl(
\Bigl\|\sum_{\bbeta}\beta_j\psi_j\nabla(\partial_{\bsy}^{\bbeta-{\bf e}_j} u)\Bigr\|^2
    +\Bigl\|\sum_{\bbeta}\beta_j\psi_j
    \nabla(\partial_{\bsy}^{\bbeta-{\bf e}_j}\mathfrak{U}')\Bigr\|^2\Bigr)\,\ud s\\
&\le \int_0^t \Bigl(\sum_{\bbeta}\beta_j b_j \|\sqrt{\kappa}\nabla(\partial_{\bsy}^{\bbeta-{\bf e}_j} u)\|
    \Bigr)^2\,\ud s+\int_0^t\Bigl(
\sum_{\bbeta}\beta_jb_j\|\sqrt{\kappa}\nabla(\partial_{\bsy}^{\bbeta-{\bf e}_j} \mathfrak{U}')\|\Bigr)^2\,\ud s\\
&\le \sum_{\bbeta}\beta_j\sum_{\bbeta}\beta_j b_j^2 \int_0^t \Bigl(\|\sqrt{\kappa}\nabla(\partial_{\bsy}^{\bbeta-{\bf e}_j} u)\|^2+\|\sqrt{\kappa}\nabla(\partial_{\bsy}^{\bbeta-{\bf e}_j} {\mathfrak U}')\|^2
    \Bigr)\,\ud s,
\end{align*}
and consequently, 
\begin{equation}\label{estimate 2}
\I({\mathcal G}^{\bbeta})(t)
\le |\bbeta|\sum_{\bbeta}\beta_j b_j^2 \I({\mathcal G}^{\bbeta-{\bf e}_j})(t)\,.
\end{equation}
We complete our proof by induction on~$|\bbeta|$. For $|\bbeta|=1$, without loss of generality, we assume that $\beta_1=1$ and all the other entries in $\bbeta$ are zeros. Then,   from the above contribution, it is 
clear that $\I({\mathcal G}^{\bbeta})\le  b_1^2 \I(\mathcal{G}^{\bf 0})$, and so  \eqref{mixed estimate} holds when~$|\bbeta|=1$. Now, assuming that \eqref{mixed estimate} is true for $|\bbeta|=n$, our task is to prove the result for $|\bbeta|=n+1$. In this case, we have at most $n+1$ non-zero entries in $\bbeta.$ Again, and without loss of generality, we assume that $\beta_1,\beta_2,\cdots,\beta_{n+1}$ are the only possible non-zero entries in $\bbeta.$  From \eqref{estimate 2} and the induction hypothesis, we have  
\begin{align*}
\I({\mathcal G}^{\bbeta})(t)
    &\le |\bbeta|\sum_{\bbeta} \beta_jb_j^2 \I({\mathcal G}^{\bbeta-{\bf e}_j})(t) \\
    &\le (n+1) (n!)^2 \sum_{j=1}^{n+1} 
    \beta_jb_j^2 {\bf b}^{2(\bbeta-{\bf e}_j)}\I(\mathcal{G}^{\bf 0})(t)
    =(n+1)^2 (n!)^2 {\bf b}^{2\bbeta}\I(\mathcal{G}^{\bf 0})(t),
\end{align*}
and the inductive step is completed. 
\end{proof}

\section{Numerical discretizations over $\Omega\times (0,T]$}\label{sec: FEM}
In this section, we discretize the parametric model 
problem~\eqref{eq:fractional equation} over $\Omega\times (0,T)$ using the numerical method from a recent paper \cite{MMD2024}. 
This scheme was developed for problems of the form~\eqref{eq:fractional equation} but with a deterministic diffusivity field~$\kappa$. The approximate solution is a continuous piecewise linear polynomial in both variables $t$~and $\bsx$. We integrate locally in time, approximate the continuous solution by a continuous piecewise linear polynomial in time, then define the weak formulation and apply the standard continuous Galerkin procedure in space, thereby defining a fully discrete scheme that achieves second-order accuracy in both time and space.  As~$\alpha\to 1$, the problem \eqref{eq:fractional equation} reduces to the well-known  classical  diffusion equation with random diffusivity and external source, and our numerical scheme in~\eqref{CNFEM} reduces to the well-known Crank--Nicolson Galerkin FEM.  To define our fully-discrete  approximate solution~$u_h(\cdot,\cdot,\bsy)\approx u(\cdot,\cdot,\bsy)$ for every $\bsy\in\mathfrak{D}$, we use the following time mesh: for $\mathcal{N}_t\ge 1$ and with~$\gamma \ge 1$, 
\begin{equation} \label{eq: time mesh}  t_n=(n\,\tau)^\gamma,\quad\text{with $\tau=T^{1/\gamma}/{\mathcal N}_t$ and $\gamma \ge 1$,}\quad\text{for $0\le n\le\mathcal{N}_t$.}
\end{equation}
This time-graded mesh compensates for the singular behaviour of the continuous solution near the origin.  Now, we introduce a family of regular (conforming) triangulations~$\mathcal{T}_h$ of the domain~$\overline{\Omega}$, and let 
$h=\max_{K\in \mathcal{T}_h}h_K$, where $h_K$ denotes the diameter of the element~$K$. Let $V_h\subset H^1_0(\Omega)$ denote the usual space of continuous, piecewise-linear functions on~$\mathcal{T}_h$ that vanish on $\partial\Omega$. We define the space $V_{h,\tau}$ consisting of the functions~$v\in\mathcal{C}([0,T];V_h)$ whose restriction to each subinterval~$I_n=(t_{n-1},t_n)$ is a polynomial of degree at most~$1$ with coefficients in~$V_h$.

For each $\bsy\in\mathfrak{D}$, our fully-discrete solution $u_h(\cdot,\cdot,\bsy)\in V_{h,\tau}$ is then defined by requiring that
\begin{equation} \label{CNFEM}
\frac{1}{\tau_n}\int_{t_{n-1}}^{t_n}\iprod{\partial_t^\alpha u_h,v_h}\,\ud t
    +\iprod{\kappa \nabla u_h^{n-1/2},\nabla v_h}=\iprod{\bar f^n,v_h},
\end{equation}
for all $v_h\in V_h$ and for $1\le n\le\mathcal{N}_t$, where
\[u_h^n(\bsx,\bsy)=u_h(\bsx,t_n,\bsy),\quad u_h^{n-1/2}=\frac12(u_h^n+u_h^{n-1}),\quad 
\bar f^n=\frac{1}{\tau_n}\int_{t_{n-1}}^{t_n} f(t)\,\ud t. \]
Thus, $u_h(t)=\tau_n^{-1}(t-t_{n-1}) u_h^n+\tau_n^{-1}(t_n-t)u_h^{n-1}$ for~$t\in I_n$, and we impose the discrete initial condition~$u_h^0=R_h g$ where $R_h:H^1_0(\Omega) \mapsto V_h$ is the Ritz projection defined by 
$\iprod{\kappa \nabla R_h w,\nabla v_h}=\iprod{\kappa\nabla w,\nabla v_h}$ 
for all $v_h \in V_h$.  We find that
\[
\frac{1}{\tau_n}\int_{t_{n-1}}^{t_n}\iprod{\partial_t^\alpha u_h,v_h}\,\ud t
    =\sum_{j=1}^n\omega^\alpha_{nj}\iprod{u^j_h-u^{j-1}_h,v_h}
\]
where
\[
\omega_{nn}^\alpha=\frac{\omega_{3-\alpha}(\tau_n)}{\tau_n^2}
\quad\text{and}\quad
\omega_{nj}^\alpha =\frac{1}{\tau_n\,\tau_j}\int_{I_n}\int_{I_j} 
    \omega_{1-\alpha}(t-s)\,\ud s\,\ud t
    \quad\text{for $1\le j\le n-1$.}
\]

For $1\le p\le d_h:=\dim V_h$, let $\bsx_p$ denote the $p$-th interior node and let $\phi_p\in V_h$ denote the corresponding nodal basis function so that, with $u_{h,p}^n(\bsy)=u_h^n(\bsx_p,\bsy)$, we have $u^n_h(\bsx,\bsy)=\sum_{p=1}^{d_h}u_{h,p}^n(\bsy) \phi_p(\bsx)$. We define $d_h\times d_h$ matrices $\mD(\bsy)=[\iprod{\kappa(\cdot,\bsy)\nabla \phi_p,\nabla \phi_q}]$ (the  stochastic stiffness matrix) and $\mM=[\iprod{\phi_p,\phi_q}]$ (the deterministic mass matrix), together with the $d_h$-dimensional column vectors $\vUh^n(\bsy)=[u^n_{h,q}(\bsy)]$~and  $\vFh^n=[\iprod{\bar f^n,\phi_p}]$. The scheme \eqref{CNFEM} may then be written in matrix form as
\begin{equation}\label{eq: matrix form}
\mS(\bsy)\vWh^n(\bsy)=\vFh^n-\mD(\bsy)\vUh^{n-1}(\bsy)
-\sum_{j=1}^{n-1}\omega_{nj}^\alpha\mM\vWh^j(\bsy),
\quad\text{for $1\le n\le\mathcal{N}_t$,}
\end{equation}
where $\mS(\bsy)=\omega_{nn}^\alpha\mM +\frac12\mD(\bsy)$ and $\vWh^n(\bsy)= \vUh^n(\bsy)-\vUh^{n-1}(\bsy)$. Due to the non-locality in time,  storing the  increments $\vWh^j$ is required at all previous time-steps to evaluate the right-hand side of the linear system~\eqref{eq: matrix form}. The matrix~$\mS(\bsy)$ is symmetric and positive definite, and consequently, the above system is uniquely solvable at each time level~$t_n$ and for 
every~$\bsy\in\mathfrak{D}$. 

If $\alpha\to1$, then $\omega^\alpha_{nn}\to\tau^{-1}$~and $\omega^\alpha_{nj}\to0$ 
for~$0\le j\le n-1$, so \eqref{eq: matrix form} tends to the Crank--Nicolson scheme
$\bigl(\mM+\tfrac12\tau\mD(\bsy)\bigr)\vUh^n=
\bigl(\mM-\tfrac12\tau\mD(\bsy)\bigr)\vUh^{n-1}+\tau\vFh^n$ for the classical diffusion 
equation with random diffusivity \cite{NobileTempone2009}: \[\partial_tu(\bsx,t,\bsy)-\nabla\cdot(\kappa(\bsx,\bsy)\nabla u)(\bsx,t,\bsy)=f(\bsx,t).\]

For the case~$\gamma=1$ of uniform time steps of size~$\tau$, the diagonal weights
$\omega^\alpha:=\omega^\alpha_{nn}=\tau^{-2}\omega_{3-\alpha}(\tau)$ and the 
matrix~$\mS(\bsy)=\omega^\alpha\mM+\frac12\mD(\bsy)=:{\bf S}(\bsy)$ are independent of~$n$, with $\omega^\alpha=\tau^{-2}\omega_{3-\alpha}(\tau)$ and
\[
\omega_{nj}^\alpha=\omega^\alpha g_{n-j}\quad\text{for}\quad
g_j =(j+1)^{2-\alpha}-2 j^{2-\alpha}+(j-1)^{2-\alpha}.
\]
Thus, $\sum_{j=1}^{n-1}\omega_{nj}^\alpha\mM\vWh^j(\bsy)
=\omega^\alpha \sum_{j=1}^{n-1}g_j^\alpha\mathfrak{M}^{n-j}(\bsy)$ where
$\mathfrak{M}^j:=\mM \vWh^j(\bsy)$.  

In the next theorem, we state the error bound arising from the space- and time-discretizations. These bounds are $\alpha$-robust in the sense that the generic constant $C$, which is independent of $h$ and $\tau$, remains bounded as $\alpha\to 1$. We refer to \cite[Theorem 4]{MMD2024} for the proof of the first result. 
\begin{theorem}\label{Convergence theorem space-time} Let $u$ be the solution of \eqref{eq:fractional equation} and let $u_h$ be the approximate solution defined by \eqref{CNFEM}. Assume that the regularity assumptions in \eqref{time regularity} and \eqref{spatial regularity} are satisfied with $\sigma$, $\nu >\alpha/2$. Choose the mesh exponent $\gamma$ such that $\gamma>\max\{2/\sigma,2/\nu, (3-\alpha)/(2\sigma-\alpha)\}$. Then,
\[
\|u(\bsy)-u_h(\bsy)\|_{L^2(J,\Omega)}\le C(h^2+\tau^2),
\quad\text{for $\bsy\in\mathfrak{D}$}.
\]
Therefore, assuming $|\calL(w)|\le \|\calL\|\,\|w\|$ for $w\in L^2(\Omega)$, we deduce that 
\begin{equation*}
\|\calL(\vu(\bsy))-\calL(\vu_h(\bsy))\|_{L^2(J)}
\le  \|\calL\|\,\|u(\bsy)-u_h(\bsy)\|_{L^2(J,\Omega)}\le C(h^2+\tau^2)\|\calL\|
\quad\text{for $\bsy\in\mathfrak{D}$.}
\end{equation*}
\end{theorem}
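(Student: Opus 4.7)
The plan is to reduce the theorem to \cite[Theorem 4]{MMD2024}, which treats exactly the same time-stepping-plus-Galerkin scheme for a deterministic diffusivity, applied pointwise in~$\bsy$, and then to extract the functional estimate by a one-line application of the boundedness of $\calL$ on $L^2(\Omega)$.

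First, I would fix an arbitrary parameter vector $\bsy\in\mathfrak{D}$ and regard \eqref{eq:fractional equation} as a deterministic subdiffusion problem with coefficient $\kappa(\cdot,\bsy)$. Thanks to \eqref{eq: kappa min max} together with \eqref{ass A1} and \eqref{ass A4}, this coefficient lies in $W^{1,\infty}(\Omega)$ with norm bounded uniformly in $\bsy$, and is bounded below by $\kappa_{\min}>0$; the Ritz projection $R_h$ is therefore stable and the bilinear form is coercive with constants independent of $\bsy$. The hypotheses \eqref{time regularity} and \eqref{spatial regularity} with exponents $\sigma,\nu>\alpha/2$ and the graded-mesh choice $\gamma>\max\{2/\sigma,\,2/\nu,\,(3-\alpha)/(2\sigma-\alpha)\}$ are exactly the hypotheses of \cite[Theorem 4]{MMD2024}, so that result yields
\[
\|u(\bsy)-u_h(\bsy)\|_{L^2(J,\Omega)}\le C(\bsy)(h^2+\tau^2).
\]

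The key verification, and the main (though minor) obstacle, is that $C(\bsy)$ can be replaced by a $\bsy$-independent constant. I would inspect the dependence of the constant in \cite[Theorem 4]{MMD2024}: it is composed of (i) the coercivity/continuity constants of the bilinear form, which are controlled by $\kappa_{\min}$ and $\kappa_{\max}$ from \eqref{eq: kappa min max}; (ii) standard FEM interpolation constants that depend only on the shape-regularity of $\mathcal{T}_h$; and (iii) the constants in \eqref{time regularity} and \eqref{spatial regularity}, which by assumption hold with a single $C$ for all $\bsy\in\mathfrak{D}$ (this is implicit in writing the bounds with a $\bsy$-free constant). Combining these, the constant in the previous display can be taken uniform over $\mathfrak{D}$, giving the stated first estimate.

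For the second estimate, I would simply use Fubini together with the hypothesis $|\calL(w)|\le\|\calL\|\,\|w\|$ applied pointwise in $t$:
\[
\|\calL(u(\bsy))-\calL(u_h(\bsy))\|_{L^2(J)}^2
=\int_J|\calL\bigl(u(\cdot,t,\bsy)-u_h(\cdot,t,\bsy)\bigr)|^2\,\ud t
\le \|\calL\|^2\,\|u(\bsy)-u_h(\bsy)\|_{L^2(J,\Omega)}^2,
\]
and then insert the first estimate to obtain the $C(h^2+\tau^2)\|\calL\|$ bound, uniformly in $\bsy$. Since the first estimate is cited directly from \cite{MMD2024}, the only real work here is to make explicit the uniform-in-$\bsy$ nature of the constants and to perform the trivial functional bound; no further fractional-calculus machinery beyond what is already summarised in Section~\ref{VFR xt} is required.
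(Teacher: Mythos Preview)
Your proposal is correct and matches the paper's approach exactly: the paper simply cites \cite[Theorem~4]{MMD2024} for the first estimate and states the second as an immediate consequence of the bound $|\calL(w)|\le\|\calL\|\,\|w\|$. If anything, your discussion of the $\bsy$-uniformity of the constant is more explicit than the paper's, which leaves that point implicit.
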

\section{Estimations of the expected value}\label{Sec: main results}
This section is devoted to estimating the expected value $E(\calL(u(t)))$ in \eqref{F}. We initially approximate $u$ by~$\widehat u$, which is the solution of \eqref{eq:fractional equation} obtained by truncating the infinite expansions in\eqref{KLexpansion} or, what amounts to the same thing, setting $y_j=0$ for~$j >z$.  Then, with $\mathfrak{D}_z=[0,1]^z$ of (finite) fixed dimension $z$, we approximate 
$E(\calL(u(t)))$ by  
\begin{equation}\label{F finite}
E_z(\calL(\widehat u(t))):=\int_{\mathfrak{D}_z}
\calL\bigl(\widehat u\bigl(\cdot,t,\bsy-{\bf\tfrac{1}{2}}\bigr)\bigr)\,\ud\bsy.
\end{equation}
The shifting of the coordinates by~${\bf \frac{1}{2}}$ translates $\mathfrak{D}_z$ to $\big[-\frac{1}{2},\frac{1}{2}\big]^z$, in preparation for applying a high-order QMC rule whose points are in~$[0,1]^z$; see \eqref{eq:QMCInt F} below.

For each fixed time $t$, the error from approximating $E(\calL(u(t)))$ by $E_z(\calL(\widehat u(t)))$ may be estimated as follows, where $\mathcal{E}^\ell(t)$
was defined in~\eqref{eq: E(t)}.
\begin{theorem}\label{Truncating error}
If \eqref{error in kappa} is satisfied, then for every $\bsy\in\mathfrak{D}$ and for every $z\in\mathbb{N}$, the solution $\widehat u(\cdot,\cdot,\bsy)$ of the $z$-term truncated parametric problem \eqref{eq:fractional equation} satisfies
\begin{equation}\label{eq:Vdimtrunc}
\|u(t,\bsy)-\widehat u(t,\bsy)\|+t^{\alpha/2}\|\nabla(u(t,\bsy)-\widehat u(t,\bsy))\|
\le C z^{1-1/p} \mathcal{E}^\ell(t),
\end{equation}
for~$0\le \ell\le 1$. Moreover, since $\ud\bsy$ is the uniform probability measure 
on~$\mathfrak{D}$, 
\[
E(\calL(u(t)))-E_z(\calL(\widehat u(t)))
=\int_{\mathfrak{D}}\calL\bigl(u(t,\bsy)-\widehat u(t,\widehat\bsy)\bigr)\,\ud\bsy,
\quad\text{for $t\in(0,T]$.}
\]
Therefore, assuming $|\calL(w)|\le \|\calL\|\,\|w\|$ for $w\in L^2(\Omega)$, then, by using~\eqref{eq:Vdimtrunc}, we have 
\[ 
|\calL(u(t, \bsy)-\widehat u(t, \bsy))|\le\|\calL\|\|u(t, \bsy)-\widehat u(t, \bsy)\|
 \le C\,z^{1-1/p}\,\|\calL\|\,  \mathcal{E}^\ell(t),
\]
 for every $\bsy\in\mathfrak{D}$, and thus,
\[
|E(\calL(u(t)))-E_z(\calL(\widehat u(t)))|\le C\,z^{1-1/p} \|\calL\|\mathcal{E}^\ell(t),\quad{\rm for}~~0\le \ell\le 1\,.
\]
\end{theorem}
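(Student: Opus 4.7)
\medskip

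\noindent\textbf{Proof proposal.} The plan is to reduce the truncation estimate to an application of Lemma~\ref{lem: reg bound} to the error equation, then dress the result with $\calL$ and integrate against the measure on~$\mathfrak{D}$.

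First I will set $e(\cdot,t,\bsy)=u(\cdot,t,\bsy)-\widehat u(\cdot,t,\bsy)$. Writing the equations for $u$~and $\widehat u$ (with diffusivities $\kappa$~and $\widehat\kappa$ respectively) and subtracting, the force terms and initial data cancel, leaving
\[
\partial_t^\alpha e-\nabla\cdot(\kappa\nabla e)
    =\nabla\cdot\bigl((\kappa-\widehat\kappa)\nabla\widehat u\bigr)
\quad\text{on $\Omega\times(0,T]$,}
\]
with homogeneous Dirichlet data and $e(\cdot,0,\bsy)=0$. This is exactly the setting of Lemma~\ref{lem: reg bound} applied to $w=e$ with source $\theta=(\widehat\kappa-\kappa)\nabla\widehat u$ and $\Theta(\bsx,t)=t\theta(\bsx,t)$. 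A short calculation gives $\Theta'=(\widehat\kappa-\kappa)\nabla\widehat{\mathfrak U}'$, where $\widehat{\mathfrak U}(t,\bsy)=t\widehat u(t,\bsy)$, since the coefficient $\widehat\kappa-\kappa$ is independent of~$t$.

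Next I will feed the bound~\eqref{error in kappa} together with the lower bound~$\kappa\ge\kappa_{\min}$ into the two quantities $\|\theta/\sqrt\kappa\|$~and $\|\Theta'/\sqrt\kappa\|$, which yields
\[
\|\theta/\sqrt\kappa\|^2+\|\Theta'/\sqrt\kappa\|^2
\le C\,z^{2(1-1/p)}\Bigl(\|\sqrt\kappa\nabla\widehat u\|^2
    +\|\sqrt\kappa\nabla\widehat{\mathfrak U}'\|^2\Bigr).
\]
Since the truncated diffusivity $\widehat\kappa$ inherits all the standing assumptions used in Subsection~\ref{VFR xt}, the regularity bound~\eqref{Reg x and t} applies to $\widehat u$, giving $\|\sqrt\kappa\nabla\widehat u(s)\|^2+\|\sqrt\kappa\nabla\widehat{\mathfrak U}'(s)\|^2\le Cs^{-\alpha}(\mathcal{R}^\ell(s))^2$. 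Integrating in time and inserting the result into~\eqref{eq: bound of nabla v} and~\eqref{eq: desired 4} produces
\[
\|e(t)\|^2+t^{\alpha}\|\sqrt\kappa\nabla e(t)\|^2
\le C\,z^{2(1-1/p)}\,t^{\alpha-1}\int_0^t s^{-\alpha}(\mathcal{R}^\ell(s))^2\,\ud s
=C\,z^{2(1-1/p)}(\mathcal{E}^\ell(t))^2,
\]
and taking square roots (using $\kappa\ge\kappa_{\min}$ to pass from $\sqrt\kappa\nabla e$ to $\nabla e$) establishes~\eqref{eq:Vdimtrunc}.

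The identity for $E(\calL(u(t)))-E_z(\calL(\widehat u(t)))$ is bookkeeping: since $\widehat u(\cdot,t,\bsy)=\widehat u(\cdot,t,\widehat\bsy)$ depends only on the first $z$ coordinates, the infinite-dimensional integral of $\calL(\widehat u(t,\widehat\bsy))$ against the product uniform measure on $\mathfrak{D}$ collapses (after the translation $\bsy\mapsto\bsy-\tfrac12$) to $E_z(\calL(\widehat u(t)))$, and the two integrals can be combined into a single integral of $\calL(u(t,\bsy)-\widehat u(t,\widehat\bsy))$. Applying the hypothesis $|\calL(w)|\le\|\calL\|\,\|w\|$ pointwise in~$\bsy$, invoking the $L^2$ part of~\eqref{eq:Vdimtrunc}, and using that the probability measure has unit total mass then yields the final quantitative bound on $|E(\calL(u(t)))-E_z(\calL(\widehat u(t)))|$.

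The main obstacle is the algebraic identification $\Theta'=(\widehat\kappa-\kappa)\nabla\widehat{\mathfrak U}'$ and the insertion of the right regularity estimate for $\widehat u$; once those two pieces are correctly aligned with the hypotheses of Lemma~\ref{lem: reg bound}, the rest of the argument is linear and mechanical.
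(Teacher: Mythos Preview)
Your proof is correct and follows essentially the same route as the paper. The only (cosmetic) difference is a symmetric choice in the error decomposition: the paper writes the error equation with the truncated diffusivity~$\widehat\kappa$ as the principal part and $\theta=(\kappa-\widehat\kappa)\nabla u$ as the source, then invokes the regularity~\eqref{Reg x and t} for~$u$, whereas you keep the original~$\kappa$ in the principal part, take $\theta=(\widehat\kappa-\kappa)\nabla\widehat u$, and invoke~\eqref{Reg x and t} for~$\widehat u$; both choices feed into Lemma~\ref{lem: reg bound} in exactly the same way and yield the same bound.
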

\begin{proof}
The function~$w(\bsx,t,\bsy)=\widehat u(\bsx,t,\bsy)-u(\bsx,t,\bsy)$ satisfies   
\[
\I^{1-\alpha} w'-\nabla\cdot(\widehat\kappa\nabla w)
    =-\nabla\cdot[(\kappa-\widehat \kappa)\nabla u]\quad\text{on $\Omega\times(0,T]$,}
\]
for every $\bsy\in\mathfrak{D}$. It is clear that $w= 0$ on $\partial\Omega\times(0,T]$ and that $w(\cdot,0,\bsy)=0$ on $\Omega$. Thus, applying \eqref{eq: bound of nabla v} and \eqref{eq: desired 4} with $\theta=(\kappa-\widehat \kappa)\nabla u$, and then using the diffusivity truncated estimate in \eqref{error in kappa}, we deduce that  
\[
\|w(t,\bsy)\|^2+t^\alpha\|\nabla w(t,\bsy)\|^2
\le C z^{1-1/p}t^{\alpha-1}\int_0^t\big(
    \|\nabla u(s,\bsy)\|^2+\|\nabla\mathfrak{U}'(s,\bsy)\|^2\Big)\,\ud s.
\]
From the regularity estimate in~\eqref{Reg x and t}, we get 
$\max\{\|\nabla u(s,\bsy)\|,\|\nabla\mathfrak{U}'(s,\bsy)\|\}
\le Cs^{-\alpha/2}\mathcal{R}^\ell(s)$ for~$0\le\ell\le 1$, and hence, 
\[
\|w(t,\bsy)\|+t^{\alpha/2}\|\nabla w(t,\bsy)\|
\le C z^{1-1/p} \mathcal{E}^\ell(t),
\quad\text{for $0\le\ell\le 1$.}
\]
Therefore, the proof of the first desired estimate is completed. 
\end{proof}

Next, we  approximate the high dimensional integral in \eqref{F finite} using a \emph{deterministic, interlaced high-order polynomial equal-weight lattice QMC rule} of the form  
\begin{equation}\label{eq:QMCInt F}
E_z(\calL(\widehat u(t)))\approx E_{z,N}(\calL(\widehat u(t)))
:=\frac{1}{N}\sum_{j=0}^{N-1}
\calL\bigl(\widehat u\bigl(\cdot,t,\bsy_j- {\bf\tfrac{1}{2}}\bigr)\bigr),
\end{equation}
for QMC points $\bsy_0$, $\bsy_1$, \dots, $\bsy_{N-1}\in\mathfrak{D}_z$. Here, $N=b^m$ for a positive integer~$m$ and a prime number $b$, and to generate the polynomial lattice rule we need a \emph{generating vector} of polynomials, $\bg=(g_1,g_2,\ldots,g_z)$ where each $g_j$ is a polynomial with degree $<m$ and coefficients taken from a finite field $\mathbb{Z}_{b}$ with $b$ elements.  In the following, let $\mathbb{Z}_{b}[x]$ be the set of all polynomials with coefficients in $\mathbb{Z}_{b}$, and let ${\mathbb Z}_{b}(x^{-1})$ be the set of all formal Laurent series $\sum_{j=q}^\infty t_j x^{-j}$, where $q$  is an arbitrary integer and the coefficients $t_j \in \mathbb{Z}_b$ for all $j.$ 

Each integer~$p$ in the range~$0\le p\le N-1$ has a unique $b$-adic representation
\[
p=(\eta_{m-1},\ldots,\eta_1,\eta_0)_b=\sum_{r=0}^{m-1}\eta_rb^r.
\]
We associate $p$ with the polynomial~$p(x)=\sum_{r=0}^{m-1}\eta_rx^r\in\mathbb{Z}_{b}[x]$, and define a map $v_{m}:{\mathbb Z}_{b}(x^{-1}) \to [0,1)$ as
\[
v_{m}\biggl(\sum_{\ell=q}^\infty t_\ell x^{-\ell}\biggr)
    =\sum_{\ell=\max(1,q)}^m t_\ell b^{-\ell},~~\text{for any integer $q$}.
\]

Let $P \in {\mathbb Z}[x]$ be an irreducible polynomial with degree $m$. The classical polynomial lattice rule $\calS_{P,b,m,z}(\bg)$ associated with $P$ and the generating vector $\bg$ comprises the quadrature points
\[
\bsy_j=\bigl(v_{m}(jg_1/P), v_m(jg_2/P), \ldots, v_{m}(jg_z/P)\bigr)\in [0,1)^z,
\quad\text{for $j = 0$, $1$, \dots, $N-1$,}
\]
where multiplication and division in $j g_i / P$ is carried out in $\mathbb{Z}(x^{-1})$.

Classical polynomial lattice rules give almost first order convergence for integrands of bounded variation. To obtain higher-order convergence, an interlacing procedure is needed. Following {\color{blue} \cite{Dick2008} } and Goda and Dick \cite{GodaDick2015}, we define $\mathscr{D}_\beta:[0,1)^\beta\to[0,1)$,  the  \emph{digit interlacing function} with digit interlacing factor~$\beta\in\mathbb{N}$, by
\begin{equation}\label{eq:DigIntl}
\mathscr{D}_\beta(x_1,\ldots,x_\beta)=\sum_{i=1}^\infty\sum_{j=1}^\beta
    \xi_{j,i}b^{-j-(i-1)\beta},
\end{equation}
where $\xi_{j,i}\in\{0,1,\ldots,b-1\}$ are the unique integers such that $x_j=\sum_{i\ge 1}\xi_{j,i}b^{-i}\in[0,1)$. In turn, the vector-valued function $\mathscr{D}_\beta^z:[0,1)^{\beta z}\to[0,1)^z$ is defined by
\[
\mathscr{D}_\beta^z(x_1,\ldots,x_{\beta z})
=\bigl(\mathscr{D}_\beta(x_1,\ldots,x_\beta), \ldots,
\mathscr{D}_\beta(x_{(z-1)\beta +1},\ldots, x_{z \beta})\bigr).
\] 
Then, an \emph{interlaced polynomial lattice rule of order $\beta$ with $b^m$ points in $z$ dimensions} is a QMC rule using $\mathscr{D}_\beta^z(\calS_{P,b,m,\beta z}(\bg))$ as quadrature points, for some given modulus $P$ and generating vector $\bg$. Next, we show the error from the high-dimensional  QMC approximation~\eqref{eq:QMCInt F}. 
\begin{theorem}\label{prop:qmc}
Assume that \eqref{ass A1}, \eqref{ass A5} and \eqref{ass A3} are satisfied, and let $b$, $m\in\mathbb{N}$ with $b$ prime. Then, one can construct an interlaced polynomial lattice rule of order $\beta:=\lfloor 1/p \rfloor+1$ with~$N=b^m$ points, using a fast component-by-component algorithm, with cost of order $\beta z N(\log N+\beta z)$  
operations, such that the following error bound holds: for $0\le \ell\le 1,$ 
\[
|E_z(\calL(\widehat u(t)))-E_{z,N}(\calL(\widehat u(t)))|\le C\,N^{-1/p}\|\calL\|\,{\mathcal E}^\ell(t),~~{\rm for}~~t \in (0,T].
\]
Here, the constant~$C$ depends on $p$~and $b$, but is independent of $z$~and $m$.
\end{theorem}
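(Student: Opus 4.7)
The strategy is to recast the claim as a direct application of the general error bound for interlaced polynomial lattice rules applied to integrands of dominating mixed smoothness, as developed in \cite{Dick2008,GodaDick2015} and systematized for PDE-UQ in \cite{DickKuoGiaNuynsSchwab2014}. I first set
\[
F(\bsy):=\calL\bigl(\widehat u(\cdot,t,\bsy-\tfrac{1}{2})\bigr),\qquad \bsy\in[0,1]^z,
\]
and note that linearity of $\calL$ gives $\partial_{\bsy}^{\bbeta}F(\bsy)=\calL(\partial_{\bsy}^{\bbeta}\widehat u(\cdot,t,\bsy-\tfrac{1}{2}))$. The continuity bound $|\calL(w)|\le\|\calL\|\|w\|$ combined with the pointwise-in-time estimate \eqref{mixed estimate 2} of Theorem \ref{lem: vu bound y}, applied to the truncated problem (whose diffusivity still satisfies \eqref{ass A1} and \eqref{ass A5}), then yields, uniformly in $\bsy\in[0,1]^z$,
\[
|\partial_{\bsy}^{\bbeta}F(\bsy)|\le C\,\|\calL\|\,{\mathcal E}^\ell(t)\,|\bbeta|!\,{\bf b}^{\bbeta},
\qquad\bbeta\in\calS,\ \mathrm{supp}\,\bbeta\subseteq\{1,\ldots,z\}.
\]
This is precisely the $|\bbeta|!\,\prod_j b_j^{\beta_j}$ structure required by the abstract QMC framework, with $\|\calL\|\,{\mathcal E}^\ell(t)$ playing the role of a $z$-independent prefactor.

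Next I introduce the standard SPOD (smoothness-driven product-and-order-dependent) weights
\[
\gamma_u=\bigl(|u|!\bigr)^2\prod_{j\in u}\bigl(c_\beta\,b_j\bigr)^{2\beta},
\qquad u\subseteq\{1,\ldots,z\},\quad\beta=\lfloor 1/p\rfloor+1,
\]
for a scalar $c_\beta$ to be tuned, and view $F$ as an element of the associated weighted unanchored Sobolev space of dominating mixed smoothness of order $\beta$; the derivative bound above shows the weighted norm $\|F\|_{\gamma}$ is bounded by $C\,\|\calL\|\,{\mathcal E}^\ell(t)$, with a constant independent of $z$. I then invoke the CBC-construction theorem \cite[Thm.~3.1]{DickKuoGiaNuynsSchwab2014}, which, using the digit-interlacing map \eqref{eq:DigIntl} with factor $\beta$, produces in $\calO(\beta z\,N(\log N+\beta z))$ operations a generating vector $\bg$ such that for every $\lambda\in(1/(2\beta),1]$,
\[
\bigl|E_z(\calL(\widehat u(t)))-E_{z,N}(\calL(\widehat u(t)))\bigr|\le\|F\|_{\gamma}\,(N-1)^{-1/(2\lambda)}\Biggl(\sum_{\emptyset\ne u\subseteq\{1,\ldots,z\}}\gamma_u^{\lambda}\,\rho_\beta(\lambda)^{|u|}\Biggr)^{1/(2\lambda)},
\]
with $\rho_\beta(\lambda)$ the explicit $N$-independent constant from \cite{Dick2008}.

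Finally, I tune $\lambda$ towards the lower endpoint $1/(2\beta)$ of its admissible range, pushing $1/(2\lambda)$ up to $1/p$. Because $\beta=\lfloor 1/p\rfloor+1>1/p$ one may pick $\lambda=p/(2-p)$ (or any larger admissible value), after which the product factor in each $\gamma_u^\lambda$ reduces to $\prod_{j\in u}(c_\beta b_j)^{2\lambda\beta}$ with $2\lambda\beta\ge p$; the sum over $u$ then factorises into $\prod_j(1+C'b_j^{\,2\lambda\beta})-1$, which is finite and bounded uniformly in $z$ thanks to $\sum_j\|\psi_j\|_{L^\infty(\Omega)}^p<\infty$ from \eqref{ass A3}. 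Collecting constants gives the stated rate $C\,N^{-1/p}\|\calL\|\,{\mathcal E}^\ell(t)$.

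The main obstacle is exactly this last weight-sum estimate together with the balancing choice of $\lambda$: the weights $\gamma_u$ must simultaneously dominate $|\partial_{\bsy}^{\bbeta}F|$ and preserve $z$-independent summability under \eqref{ass A3}, which is the substantive content of the SPOD construction of \cite{DickKuoGiaNuynsSchwab2014}. It applies here verbatim because Theorem \ref{lem: vu bound y} reproduces the same $|\bbeta|!\,{\bf b}^{\bbeta}$ growth as in the classical stationary elliptic case, the only change being the benign time-dependent prefactor ${\mathcal E}^\ell(t)$.
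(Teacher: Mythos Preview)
Your proposal is correct and follows essentially the same route as the paper: establish the mixed-derivative bound $|\partial_{\bsy}^{\bbeta}F(\bsy)|\le C\|\calL\|\,|\bbeta|!\,{\bf b}^{\bbeta}\mathcal{E}^\ell(t)$ via Theorem~\ref{lem: vu bound y} applied to~$\widehat u$, and then invoke \cite[Theorem~3.1]{DickKuoGiaNuynsSchwab2014}. The paper's proof stops at that citation, whereas you have additionally unpacked the SPOD-weight and $\lambda$-tuning mechanics behind that theorem; this elaboration is standard and not required, though your specific choice $\lambda=p/(2-p)$ does not literally give $1/(2\lambda)=1/p$ (you want $\lambda=p/2$, which is admissible since $p/2>1/(2\beta)$).
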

\begin{proof}
Using  \eqref{mixed estimate 2} with $\widehat u$ in place of $u$,  for each fixed $t$, we have the regularity estimate 
\[\bigl|\partial^{\bbeta}_\bsy\calL\bigl(\widehat u(t,\bsy-{\bf\tfrac{1}{2}})\bigr)\bigr|
=\bigl|\calL\bigl(\partial^{\bbeta}_\bsy\widehat u(t,\bsy-{\bf\tfrac{1}{2}})\bigr)\bigr|
\le\|\calL\|\bigl\|\partial_{\bsy}^{\bbeta}\widehat u\big(t,\bsy-{\bf\tfrac12})\bigr\|\\ 
\le C\|\calL\||\bbeta|!\,{\bf b}^{\bbeta}{\mathcal E}^\ell(t)
\]
for $0\le \ell\le 1,$ for all $\bsy\in\mathfrak{D}_z$ and for all $\bbeta\in\{0, 1,\ldots, \beta\}^z$. 
Therefore, direct application of a high-order QMC error bound of Dick et al. \cite[Theorem~3.1]{DickKuoGiaNuynsSchwab2014} yields the desired estimate. 
\end{proof}

In the next theorem, the generic constant $C$ is independent of the finite element mesh size $h$, the time mesh size $\tau$, the truncated dimension $z$, and the QMC exponent~$m$. However, $C$ may depend on $\Omega$, $T$, and other parameters including $p$, $b$, $g$, $f$, $\kappa_{\min}$, $\kappa_{\max}$, and the upper bound of $\|\nabla \kappa\|_{L^\infty(\Omega)}$.  To guarantee that $C$ remains finite as~$\alpha$ approaches~$1$, we require  that  $\mathcal{R}^\ell(t)$ in \eqref{eq: frak U cal R} to be  bounded by $c\,t^\epsilon$ for~$t>0$, for some $c$, $\epsilon>0$, and  for some $0\le \ell \le 1.$ Hence, ${\mathcal E}^\ell(t)\le c t^\epsilon /\sqrt{1-\alpha+2\epsilon}.$ This assumption, which is implicitly needed for the regularity properties in \eqref{time regularity} and \eqref{spatial regularity},    holds true under  realistic conditions on the given data $g$ and $f.$

\begin{theorem}\label{main results} 
For every $\bsy\in\mathfrak{D}$, let $u(\cdot,\cdot,\bsy)$ be the solution 
of~\eqref{eq:fractional equation} and let $\widehat u_h(\cdot,\bsy)$ be the numerical  
solution defined as in~\eqref{CNFEM} with  $\widehat\kappa$ in place 
of~$\kappa$.  Then, under the hypotheses of Theorem~\ref{Convergence theorem space-time} 
and the assumptions~\eqref{ass A1}--\eqref{ass A3}, one can construct an interlaced polynomial lattice rule of order $\beta:=\lfloor 1/p\rfloor+1$ with~$N$ points, such that
\[    
\|E(\calL(u))-E_{z,N}(\calL(\widehat u_h))\|_{L^2(J)}
    \le C\,\bigl(z^{1-1/p}+ N^{-1/p}+h^2+\tau^2\bigr)\|\calL\|.
\]
\end{theorem}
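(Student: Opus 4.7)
The plan is to decompose the total error via the triangle inequality into three contributions matching the three theorems already established, and then take the $L^2(J)$ norm in~$t$. Write
\[
E(\calL(u))-E_{z,N}(\calL(\widehat u_h))
=\underbrace{E(\calL(u))-E_z(\calL(\widehat u))}_{\text{(I) truncation}}
+\underbrace{E_z(\calL(\widehat u))-E_{z,N}(\calL(\widehat u))}_{\text{(II) QMC}}
+\underbrace{E_{z,N}(\calL(\widehat u))-E_{z,N}(\calL(\widehat u_h))}_{\text{(III) FEM+time}}.
\]
Terms (I) and (II) are controlled, pointwise in~$t\in(0,T]$, directly by Theorem~\ref{Truncating error} and Theorem~\ref{prop:qmc}, yielding bounds of the form $C\,z^{1-1/p}\|\calL\|\,\mathcal{E}^\ell(t)$ and $C\,N^{-1/p}\|\calL\|\,\mathcal{E}^\ell(t)$, respectively. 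For term (III), since $E_{z,N}$ is an equal-weight average with positive weights $1/N$, the triangle inequality gives
\[
|E_{z,N}(\calL(\widehat u(t)))-E_{z,N}(\calL(\widehat u_h(t)))|
\le \max_{0\le j\le N-1}\bigl|\calL\bigl(\widehat u(t,\bsy_j-\tfrac12)
    -\widehat u_h(t,\bsy_j-\tfrac12)\bigr)\bigr|,
\]
and applying Theorem~\ref{Convergence theorem space-time} to the truncated parametric problem (with $\widehat\kappa$ in place of $\kappa$, which satisfies the same bounds \eqref{eq: kappa min max} and regularity assumptions uniformly in the truncation level $z$) controls this maximum in the $L^2(J)$-norm by $C(h^2+\tau^2)\|\calL\|$.

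Next I would take $L^2(J)$-norms in time. The bound for term (III) is already in $L^2(J)$. For (I) and (II), this reduces to showing that $\|\mathcal{E}^\ell\|_{L^2(J)}$ is finite and independent of $\alpha$ as $\alpha\to1$. This is exactly where the remark preceding the theorem is used: the hypothesis $\mathcal{R}^\ell(t)\le c\,t^\epsilon$ yields
\[
\mathcal{E}^\ell(t)\le \frac{c\,t^\epsilon}{\sqrt{1-\alpha+2\epsilon}},
\]
whence $\|\mathcal{E}^\ell\|_{L^2(J)}\le C T^{\epsilon+1/2}$ with a constant that stays bounded as $\alpha\to1$. Combining the three pieces then gives the stated bound
\[
\|E(\calL(u))-E_{z,N}(\calL(\widehat u_h))\|_{L^2(J)}
\le C\bigl(z^{1-1/p}+N^{-1/p}+h^2+\tau^2\bigr)\|\calL\|.
\]

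The main subtlety I expect is in term (III): one must verify that Theorem~\ref{Convergence theorem space-time}, which was stated for the untruncated coefficient $\kappa$, carries over with the same constant to $\widehat\kappa$. This is because the hypotheses \eqref{ass A1}, \eqref{ass A4}, \eqref{time regularity}, \eqref{spatial regularity} and the two-sided bounds \eqref{eq: kappa min max} all hold for $\widehat\kappa$ uniformly in $z$ and $\bsy$ (the truncated series only removes nonnegative tail contributions to the sums appearing in the regularity constants), so the constant $C$ in Theorem~\ref{Convergence theorem space-time} can be taken independent of $z$ and $\bsy$. The maximum over the $N$ QMC nodes therefore inherits the same bound, completing the proof.
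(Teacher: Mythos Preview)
Your proposal is correct and follows essentially the same approach as the paper: the paper's proof also decomposes the error by the triangle inequality into the truncation, QMC, and space-time discretization pieces and then invokes Theorems~\ref{Truncating error}, \ref{prop:qmc}, and~\ref{Convergence theorem space-time} respectively. Your write-up is in fact more explicit than the paper's about why term (III) is uniform in the QMC nodes and in~$z$, and about the $L^2(J)$-integrability of~$\mathcal{E}^\ell$, but these are elaborations on the same argument rather than a different route.
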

\begin{proof}
We start our proof by decomposing the error as
\begin{multline}\label{combine}
\|E(\calL(u))-E_{z,N}(\calL(\widehat u_h))\|_{L^2(J)}
    \le\|E(\calL(u))-E_z(\calL(\widehat u))\|_{L^2(J)}\\
    +\|E_z(\calL(\widehat u))-E_{z,N}(\calL(\widehat u))\|_{L^2(J)}
    +\|E_{z,N}(\calL(\widehat u-\widehat u_h))\|_{L^2(J)}.
\end{multline} 
To estimate the first and second terms in~\eqref{combine}, we apply Theorems 
\ref{Truncating error}~and \ref{prop:qmc}, respectively.  For the third term, we 
apply Theorem~\ref{Convergence theorem space-time}, setting $y_j=0$ when~$j>z$. 
\end{proof}
\section{Numerical results}\label{Sec: Numeric}

\begin{figure}
\caption{One realisation of the solution $u_h$ at time $t=0$ (left) and at $t=1$ (right).}
\label{fig: contour plots}
\vspace{1em}
\centering
\includegraphics[scale=0.57]{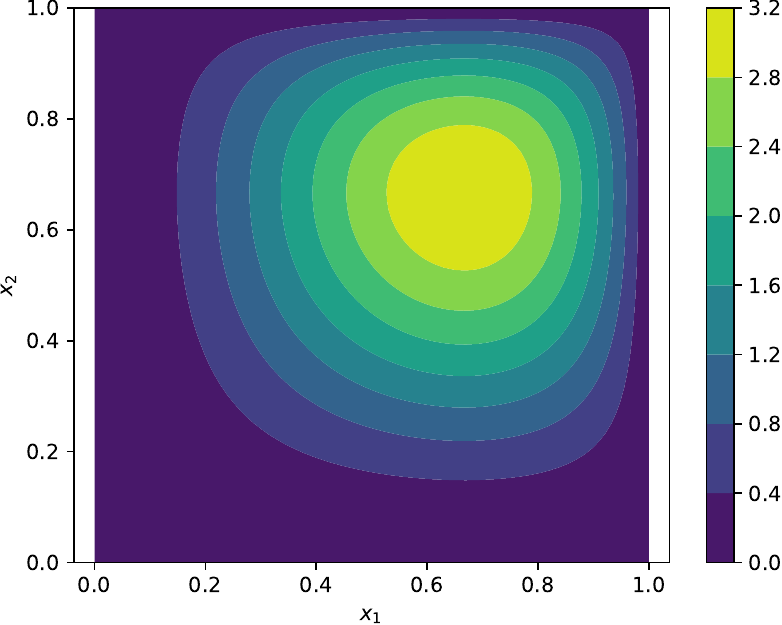}
\hfill
\includegraphics[scale=0.57]{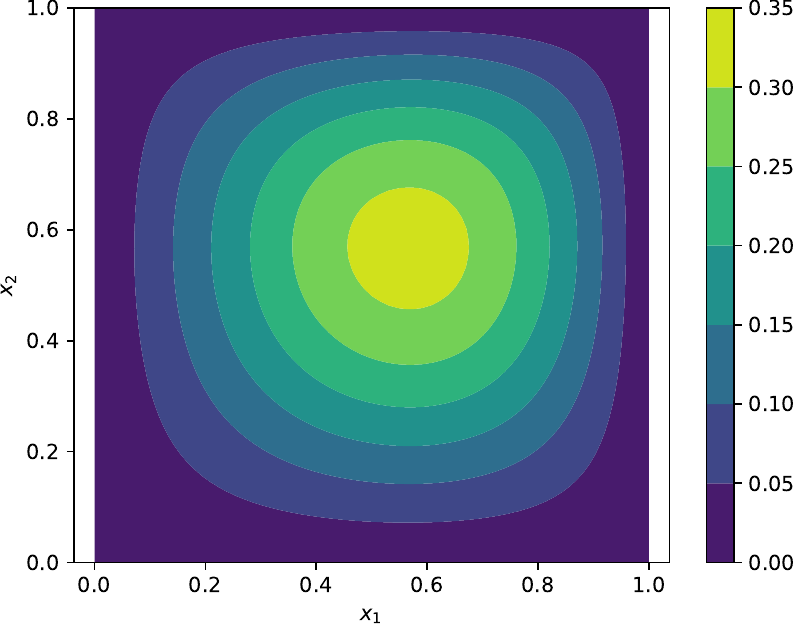}
\end{figure}

In this section, we illustrate numerically the theoretical finding in Theorem \ref{main results}. For the case of the deterministic model, we illustrated order $\tau^2+h^2$ convergence of $u_h$ in an earlier paper \cite{MMD2024}, for various choices of the initial data $g$.  In addition, on different deterministic examples, ${\mathcal O}(\tau^2)$-rates were recently illustrated in \cite{MMD2024} using the stronger $L^\infty(J,L^2(\Omega))$ norm. So, we focus in this section on confirming the  high-order QMC error estimates.  Let $\alpha=1/2$ and, for $\bsx=(x_1,x_2)$ in the unit square $\Omega=(0,1)^2$ and for $0\le t\le T=1$, define
\[
\kappa_0(\bsx)=\frac{2+x_1x_2}{10},\quad g(\bsx)=144x_1^2(1-x_1)x_2^2(1-x_2),\quad 
f(\bsx,t)=1,
\]
together with the random coefficient 
\[
\kappa(\bsx,\bsy)=\kappa_0(\bsx)+\sum_{l=1}^q\sum_{k=1}^{q+1-l}
    y_{k,l}\psi_{kl}(\bsx)\quad\text{where}\quad
    \psi_{k,l}(\bsx)=\frac{\sin(k\pi x_1)\,\sin(l\pi x_2)}{M(k+l)^4}.
\]
Here, the coefficients $y_{k,l}\in(-\tfrac12,\tfrac12)$ are ordered with the first index $k$ varying most rapidly, that is, $\bsy= (y_{1,1},y_{2,1},\ldots,y_{q,1},y_{1,2},y_{2,2},\ldots, y_{q-1,2},\ldots,y_{q,1})\in(-\frac12,\frac12)^z$ for~$z=q(q+1)/2$.  The constant $M=\zeta(3)-\zeta(4)>0$ ensures that our  assumption \eqref{eq: kappa min max} is satisfied,  with~$\zeta(s)=\sum_{n=1}^\infty n^{-s}$ denoting the Riemann zeta function, and for our computations we chose $q=22$. The number  of random coefficients~$y_{k,l}$ is therefore~$z=253$. In addition, it may be shown \cite[Section~6]{DickEtAl2024} that the assumptions \eqref{ass A4}--\eqref{ass A3} are satisfied for~$p>1/2$.  As the linear functional, we chose the average value $\calL(v)=\int_\Omega v(\bsx)\,d\bsx$ over the unit square, and note that the above normalisation of the initial data means that $\calL(g)=1$.

For the spatial discretisation, we generated a quasi-uniform triangulation~$\mathcal{T}_h$ of~$\Omega$, with maximum element diameter~$h=0.0269$ and with $2,815$ interior nodes. For the time discretisation, we defined $t_n$ as in \eqref{eq: time mesh} with $\mathcal{N}_t=150$ and $\gamma=2/\alpha=4$.  The contour plots in Figure \ref{fig: contour plots} show, for one particular choice of $\bsy$, the numerical solution at~$t=0$ and at $t=T=1$. 

For brevity, define $E_{z,N,h}(t)=E_{z,N}\bigl(\mathcal{L}(\widehat u_h(t))\bigr)$.
Table~\ref{tab: QMC convergence} lists the values of $E_{z,N,h}(T)$ produced by the 
$N$-point QMC rule for $N=16$, $32$, $64$ and $128$, along with a reference value 
computed using $N=512$.  Two sets of errors are given: those at the final time, 
$|E_{z,N,h}(T)-E_{z,512,h}(T)|$, and the $L^2$ errors 
$\|E_{z,N,h}-E_{z,512,h}\|_{L^2(J)}$.  The associated convergence rates for both are also 
shown. Since all solutions (including the  reference solution) use the same finite element 
mesh, the same time levels~$t_n$ and the same number~$z$ of random coefficients, we are 
effectively considering only the contribution to the error arising from using the QMC rule 
to approximate the expected value \eqref{F finite}. Our assumptions are satisfied for 
$p>1/2$, and the observed convergence rates are consistent with the error bound of 
order~$N^{-1/p}$ from Theorem~\ref{prop:qmc}, that is, of order $N^{-(2-\epsilon)}$ 
for any $\epsilon>0$.

\begin{table}[t]
\centering
\renewcommand{\arraystretch}{1.2}
{\tt
\begin{tabular}{c|ccc|cc}
$N$&$E_{z,N,h}(T)$&
\textrm{error at $T$}&\textrm{rate}&\textrm{error in $L^2(J)$}&\textrm{rate}\\
\hline
    16&   0.2573698441&   7.08e-05&         &   7.59e-05&         \\ 
    32&   0.2573163627&   1.73e-05&    2.031&   1.85e-05&    2.038\\ 
    64&   0.2573036000&   4.56e-06&    1.926&   4.81e-06&    1.941\\ 
   128&   0.2573001107&   1.07e-06&    2.094&   1.12e-06&    2.098\\ 
\hline
   512&   0.2572990433&
\end{tabular}
}
\caption{Convergence with respect to the number~$N$ of QMC points.}
\label{tab: QMC convergence}
\end{table}

\begin{figure}
\centering
\includegraphics[scale=0.7]{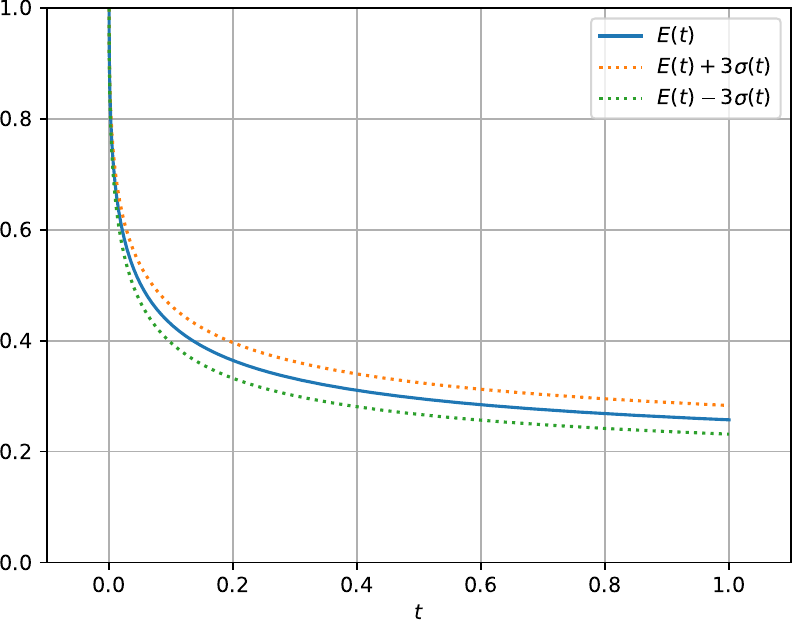}
\caption{The expected value $E(t)=E_{z,N,h}(t)$ when $N=512$.}
\label{fig: E(t)}
\end{figure}

Figure~\ref{fig: E(t)} shows how the expected value \eqref{F finite}, or rather the approximation~$E_{z,N,h}(t)$ with~$N=512$, varies with time.  Also shown are the dashed curves $E_{z,N,h}\pm3\sigma$, where $\sigma=\sigma_{z,N,h}(t)$ is the standard deviation of~$\mathcal{L}(\widehat u_h(t))$.

The Julia code used in the above computations is hosted on Github~\cite{McLean2024}, and was executed using 8~threads on a Ryzen 7 3700X CPU.  The algorithm precomputes the sparse Cholesky factorization of the matrix
\[
\mS_0(\tau)=\frac{\omega_{3-\alpha}(\tau)}{\tau^2}\,\mM+\frac12\mD({\bf 0})
\quad\text{for $\tau\in\{\,10^l:\lfloor\log_{10}(\tau_{\min})\rfloor\le l\le
\lceil\log_{10}(\tau_{\max})\rceil\,\}$},
\]
where $\tau_{\min}=\tau_1$ and $\tau_{\max}=\tau_{\mathcal{N}_t}$  and preconditions the 
matrix $\mS(\bsy)$ in the linear system \eqref{eq: matrix form} using the Cholesky factor 
with~$\tau=10^k$ for~$k=\argmin_l|\tau_n-10^l|$.  In addition, fast evaluation of the 
sum over~$j$ on the right-hand side of~\eqref{eq: matrix form} is achieved via an 
exponential sum approximation~\cite{McLean2018} of~$\omega_{1-\alpha}(t)$. For an 
overview of the software used to generate the QMC points, we refer to the survey paper 
of Kuo and Nuyens~\cite[Section 7]{KuoNuyens2016}. 

\end{document}